\newtheorem {Lemma}{Lemma}[section]
\newtheorem {Theorem} {Theorem}[section]
\newtheorem {Corollary}{Corollary}[section]
\begin{document}

\title{Spectral conditions for graphs in which every edge belongs to a factor}

\author{Jin Cai\footnote{E-mail: jincai@m.scnu.edu.cn}, Bo Zhou\footnote{E-mail: zhoubo@scnu.edu.cn} \\
School of  Mathematical Sciences, South China Normal University,\\
Guangzhou 510631, P.R. China}

\date{}
\maketitle

\begin{abstract}
A factor of a graph is a spanning subgraph.
Spectral  sufficient conditions are provided via spectral radius and signless Laplacian spectral radius for graphs with
(i) a matching of given size (particularly,  $1$-factor) containing any given edge, and (ii)
a star factor with a component isomorphic to stars of order two or three containing any given edge, respectively.  \\ \\
{\bf Mathematics Subject Classifications:} 05C50\\ \\
{\bf Keywords:}  $1$-factor, star factor, spectral radius,  signless Laplacian spectral radius
\end{abstract}

\section{Introduction}

We consider finite, simple and undirected graphs. A factor of a graph is a spanning subgraph.
For integers $a$ and $b$ with $0\le a\le b$, an $[a,b]$-factor is such a factor $F$ such that $a\le d_F(x)\le b$ for every vertex $x$. A $1$-factor is a $[1,1]$-factor.
Denote by
$K_{1,m}$ the star of order $m+1$ (i.e., a complete bipartite graph with partite sizes $1$ and $m$).
For positive integer $k$, let $S(k)=\{K_{1,1}, \dots, K_{1,k}\}$. An $S(k)$-factor is  factor for which  each component is (isomorphic to) one of the stars $K_{1,1}, \dots, K_{1,n}$. A $\{K_{1,1}\}$-factor is a $1$-factor. A star factor is an $S(k)$-factor for some $k$.
The existence of factors with given properties  has received  special attention, see, e.g. \cite{CEK,FK,Ka,Lit,LGH,Ne,ZZ}. For example, a graph with factor containing any given edge is known as to be factor covered.
In recent years,
it of great interest for researchers to find spectral sufficient conditions such that a graph has a given factor, see the survey \cite{FLLO}.
Li and Miao \cite{LM} gave spectral condition that implies a graph has a $\mathcal P_{\ge 2}$-factor (a factor consisting of vertex disjoint paths on at least two vertices) containing any given edge.

Given a graph, we denote by $\rho(G)$ the spectral radius of $G$ and $q(G)$ the signless Laplacian spectral radius of $G$. We are concerned two types of factors with given properties.

One is $1$-factor containing any given edge, for which Little et al. \cite{LGH}
gave a characterization. Though this concept has been extended to different factors that contain a given edge, as far as we know,
there is no spectral sufficient condition for a graph  has a $1$-factor containing any given edge.
Feng et al. \cite{FZ} and Kim et al. \cite{KOSS} gave spectral radius conditions that imply a graph on $n$ vertices has a matching of size (at least) $\frac{n-k}{2}$ for $0\le k\le n$. Instead of considering
$1$-factor containing any given edge directly, we establish spectral  conditions that imply a graph on $n$ vertices has a matching of size (at least) $\frac{n-k}{2}$ for $0\le k\le n$ containing any given edge. We show the following results.

\begin{Theorem}\label{Cai1}
Let $G$ be a graph of order $n\ge  5k+6$, where $n \equiv k \pmod 2$. If $\rho(G)\ge \rho(K_2\vee ((k+1)K_1\cup K_{n-k-3}))$, then $G$ has a matching of size $\frac{n-k}{2}$  containing any given edge unless $G\cong K_2\vee ((k+1)K_1\cup K_{n-k-3})$.
\end{Theorem}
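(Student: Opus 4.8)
The plan is to translate the edge-covering-matching requirement into a Tutte-type deficiency condition, reduce the extremal problem to a one-parameter family of candidate graphs, and then finish with a spectral-radius comparison inside that family.

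\emph{Step 1: a structural characterization.} Put $s=\frac{n-k}{2}$. An edge $uv$ lies in a matching of size $s$ iff $G-u-v$ has a matching of size $s-1$. By the Berge--Tutte formula, $G-u-v$ has a matching of size $s-1=\frac{(n-2)-k}{2}$ iff $o(G-u-v-S)-|S|\le k$ for every $S\subseteq V(G)\setminus\{u,v\}$, where $o(\cdot)$ is the number of components of odd order. Writing $T=\{u,v\}\cup S$ and using that $o(G-T)\equiv n-|T|\equiv k+|T|\pmod 2$ (so the value $|T|+k-1$ is excluded by parity), one obtains: $G$ has \emph{no} matching of size $s$ through some edge iff there is a set $T\subseteq V(G)$ with $|T|\ge 2$, $e(G[T])\ge 1$, and $o(G-T)\ge |T|+k$. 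So assume from now on that $G$ has such a set $T$, say $|T|=t$; since $t+k\le o(G-T)\le n-t$, we get $2\le t\le\frac{n-k}{2}$, and in particular $n-2t-k+1\ge 1$.

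\emph{Step 2: reduction to a candidate family.} Let $H_t:=K_t\vee\big((t+k-1)K_1\cup K_{\,n-2t-k+1}\big)$; note $n-2t-k+1$ is odd, so $V(K_t)$ is itself a bad set of $H_t$, and $H_t$ is connected. Keeping $T$ fixed, I will raise $\rho$ by operations that preserve the inequality $o(G-T)\ge t+k$: (a) insert all edges inside $T$ and between $T$ and $V(G)\setminus T$; (b) replace each component of $G-T$ by a clique on the same vertex set; (c) repeatedly absorb an even clique into an odd clique, and then repeatedly merge three odd cliques into one (this keeps the number of odd cliques of fixed parity and decreases it by $2$), until $G-T$ is a disjoint union of exactly $t+k$ odd cliques; (d) apply a Kelmans-type edge-shift which, given two cliques of order $\ge3$ joined to $K_t$, moves two vertices from the smaller clique to the larger one, repeated until only one clique has order $>1$. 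Steps (a)--(c) only add edges to a (after (a)) connected graph, so $\rho$ strictly increases unless nothing is added; for (d), writing $\alpha\le\beta$ for the Perron-vector values on the smaller and larger cliques, the Rayleigh-quotient estimate $\rho(H')\ge\rho(H)+\tfrac{2\alpha}{x^{\top}x}\big(a_j\beta-(a_i-2)\alpha\big)>\rho(H)$ gives strict increase. The terminal graph is exactly $H_t$, hence $\rho(G)\le\rho(H_t)$ with equality iff $G\cong H_t$.

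\emph{Step 3: comparison within the family and conclusion.} It remains to prove $\rho(H_t)\le\rho(H_2)=\rho\big(K_2\vee((k+1)K_1\cup K_{n-k-3})\big)$ for all $2\le t\le\frac{n-k}{2}$, with equality only at $t=2$. Each $H_t$ has the equitable tripartition into $V(K_t)$, the $t+k-1$ isolated vertices, and $V(K_{n-2t-k+1})$, with quotient matrix
\[
B_t=\begin{pmatrix} t-1 & t+k-1 & n-2t-k+1\\ t & 0 & 0\\ t & 0 & n-2t-k\end{pmatrix},
\]
so $\rho(H_t)$ is the largest root of the cubic $\phi_t(\lambda)=\det(\lambda I-B_t)$. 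I will show $\phi_{t+1}(\rho(H_t))>0$ while $\rho(H_t)$ exceeds every root of $\phi_{t+1}$, giving $\rho(H_{t+1})<\rho(H_t)$ for $2\le t\le\frac{n-k}{2}-1$; here the hypothesis $n\ge 5k+6$ is used, together with the lower bound $\rho(H_2)\ge n-k-2$ (since $K_{n-k-1}\subseteq H_2$), to control the whole chain. Combining Steps 1--3: if $\rho(G)\ge\rho(H_2)$ and $G\not\cong H_2$, then $G$ cannot fail, i.e.\ $G$ has a matching of size $\frac{n-k}{2}$ containing any given edge.

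\emph{Main obstacle.} The structural reduction is conceptually clean, and the edge-shift monotonicity in Step 2(d) is standard (though it needs the Perron-vector computation above). The delicate part is Step 3: bounding the largest root of $\phi_t$ uniformly in $t$ and verifying that $n\ge 5k+6$ is exactly what makes the comparison $\rho(H_t)<\rho(H_2)$ go through for every $t\ge3$.
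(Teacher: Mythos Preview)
Your plan is correct and follows the paper's strategy: a Tutte-type deficiency characterization, reduction to the one-parameter family $H_t=K_t\vee((t+k-1)K_1\cup K_{n-2t-k+1})$ via edge-addition and Perron-vector edge-shifting, and then a characteristic-polynomial comparison using the same $3\times 3$ equitable quotient matrix you wrote down.

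Two remarks on where you diverge. First, your Step~1 is cleaner than the paper's: the paper quotes the Little--Grant--Holton criterion (Lemma~\ref{matching}), whose negation splits into ``$o(G-S)\ge |S|+k$ with $G[S]$ nonempty'' and ``$o(G-S)\ge |S|+k+2$ with $G[S]$ empty'', forcing a separate case analysis for independent $S$ (including the subcases $s=0,1$). Your derivation via Berge--Tutte on $G-u-v$ guarantees from the outset that $T$ contains an edge, so you land directly in the nonempty case and skip that detour entirely. Second, in Step~3 the paper does not prove the chain $\rho(H_{t+1})<\rho(H_t)$; instead it compares each $H_s$ directly to $H_2$ by showing that $(f_s(x)-f_2(x))/(s-2)$ is positive for all $x\ge n-k-2$ (this is Lemma~\ref{h1}, and this is precisely where $n\ge 5k+6$ enters). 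That single comparison is a bit less delicate than your inductive monotonicity, and since you already know $\rho(H_2)>n-k-2$, it finishes in one stroke; you may find it the easier route to complete your outline.
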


\begin{Theorem}\label{Cai2}
Let $G$ be a graph of order $n\ge  5k+7$, where $n \equiv k \pmod 2$. If $q(G)\ge  q(K_2\vee ((k+1)K_1\cup K_{n-k-3}))$, then $G$ has a matching of size $\frac{n-k}{2}$  containing any given edge unless $G\cong K_2\vee ((k+1)K_1\cup K_{n-k-3})$.
\end{Theorem}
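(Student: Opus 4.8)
The plan is to prove the contrapositive, in parallel with Theorem~\ref{Cai1} but with $\rho$ replaced by $q$ and the spectral estimates redone. Write $o(H)$ for the number of odd components of a graph $H$, and set $G_2:=K_2\vee((k+1)K_1\cup K_{n-k-3})$. Assume $G$ has order $n\ge 5k+7$ with $n\equiv k\pmod 2$, that $q(G)\ge q(G_2)$, and that $G$ has an edge lying in no matching of size $\frac{n-k}{2}$; the goal is to deduce $G\cong G_2$. The first step is the structural criterion: applying the Berge--Tutte formula to $G-u-v$, where $uv$ is such a bad edge, and sharpening by the parity $n\equiv k\pmod 2$, one obtains a set $T\subseteq V(G)$ with $u,v\in T$ and $o(G-T)\ge |T|+k$; conversely, any such $T$ spanning an edge witnesses that $G$ fails. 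Put $t:=|T|$. Since $G-T$ has at least $t+k$ components, $n-t\ge t+k$, so $2\le t\le\frac{n-k}{2}$; note also that $G_2$ itself is bad via $T=V(K_2)$, since $o(G_2-V(K_2))=(k+1)+1=k+2$ because $n-k-3$ is odd, so $G_2$ is a genuine exceptional graph.

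Next I would reduce to a canonical family. Densifying each component of $G-T$ to a clique, then merging every even component into an odd one and merging odd components in pairs until exactly $t+k$ of them remain --- each operation only inserting edges and preserving the property of being bad --- shows that $G$ is a spanning subgraph of
$$G(t;a_1,\dots,a_{t+k}):=K_t\vee\big(K_{a_1}\cup\cdots\cup K_{a_{t+k}}\big)$$
for some odd integers $a_1\ge\cdots\ge a_{t+k}\ge 1$ with $\sum_i a_i=n-t$. Hence $q(G)\le q\big(G(t;a_1,\dots,a_{t+k})\big)$, and everything is reduced to bounding the right-hand side over all admissible choices.

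The crux is to show $q\big(G(t;a_1,\dots,a_{t+k})\big)\le q(G_2)$ for every admissible choice, with equality only for $t=2$ and $(a_1,\dots,a_{t+k})=(n-k-3,1,\dots,1)$. I would organize this by the size $t$ of the bad set. For small $t$, a clique-shifting (Kelmans-type) argument should show that $q$ increases as the profile $(a_1,\dots,a_{t+k})$ becomes more unbalanced, reducing to $G_t:=K_t\vee((t+k-1)K_1\cup K_{n-2t-k+1})$; then, comparing the $3\times 3$ equitable quotient matrices of $Q(G_t)$ and $Q(G_2)$, whose largest eigenvalues are exactly $q(G_t)$ and $q(G_2)$, one proves $q(G_t)<q(G_2)$ for $t\ge 3$, and it is precisely here that the hypothesis $n\ge 5k+7$ is consumed (accounting for the extra ``$+1$'' over Theorem~\ref{Cai1}). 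For large $t$, the components of $G(t;\dots)-V(K_t)$ all have order $\le n-2t-k+1$, so the graph has small maximum degree and few edges, and a crude bound such as $q\le\max_{xy\in E}(d_G(x)+d_G(y))$ already forces $q<2(n-k-2)<q(G_2)$. Finally one pins down the equality case, forcing $t=2$ and the profile $(n-k-3,1,\dots,1)$. I expect the small-$t$ subcase to be the main obstacle: making the clique-shifting reduction rigorous with its equality analysis, and squeezing the cubic (quotient-matrix) comparison so that $n\ge 5k+7$ is exactly what makes it go through.

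To conclude, combining the reduction with the spectral comparison and the hypothesis gives $q(G_2)\le q(G)\le q\big(G(t;a_1,\dots,a_{t+k})\big)\le q(G_2)$, so equality holds throughout; by the equality case, $G$ is a spanning subgraph of $G_2$ with $q(G)=q(G_2)$, and since $G_2$ is connected, the Perron--Frobenius theorem (strict monotonicity of $q$ under edge addition into a connected graph) forces $G\cong G_2$, contrary to assumption. Therefore every edge of $G$ lies in a matching of size $\frac{n-k}{2}$.
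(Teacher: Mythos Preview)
Your approach is essentially the paper's: reduce $G$ to $K_t\vee(K_{a_1}\cup\cdots\cup K_{a_{t+k}})$, apply a Kelmans-type shift (the paper's Lemmas~\ref{GE} and~\ref{SP}) to reach $H_t:=K_t\vee((t+k-1)K_1\cup K_{n-2t-k+1})$, and then compare the characteristic polynomials of the $3\times3$ equitable quotient matrices to prove $q(H_t)<q(H_2)$ for $t\ge 3$ (the paper's Lemma~\ref{h2}, where $n\ge 5k+7$ is consumed). Your construction of the witness set $T$ directly from Berge--Tutte applied to $G-u-v$ is a genuine shortcut over the paper: since $uv\in E(G[T])$ automatically, you are always in the paper's Case~1 and never need its Case~2 (the independent-$S$ case with its separate subcases $s=0$, $s=1$, $s\ge 2$ and the extra parity step $o(G-S)\ge s+k+2$).

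One point does not work as written. Your proposed ``large $t$'' branch, bounding $q$ by $\max_{xy\in E}(d_G(x)+d_G(y))$, fails: the vertices of the $K_t$ part have degree $n-1$, so any edge inside $K_t$ gives $d_x+d_y=2(n-1)$, not something below $2(n-k-2)$. Fortunately no split into small/large $t$ is needed at all; the quotient-matrix comparison (Lemma~\ref{h2}) already covers every $t$ with $2\le t\le\frac{n-k}{2}$ uniformly. Just run the Kelmans reduction to $H_t$ and the cubic comparison for all $t\ge 2$, and drop the separate large-$t$ paragraph.
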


The other is an $S(k)$-factor with a component being  $K_{1,1}$ or $K_{1,2}$ containing any given edge, for which   Chen,  Egawa and  Kano \cite{CEK}
gave a characterization. Also, we establish spectral conditions that imply a graph  has an $S(k)$-factor with a component being  $K_{1,1}$ or $K_{1,2}$ containing any given edge. Our results are as follows.

\begin{Theorem}\label{Cai3}
Let $G$ be a  graph of order $n\ge  \frac{3}{2}k+7$ without isolated vertices. If $\rho(G)\ge \rho(K_2\vee(2K_1 \cup K_{n-4}))$, then $G$ has an $S(k)$-factor in which a component being  $K_{1,1}$ or $K_{1,2}$ containing any given edge unless $G\cong K_2\vee(2K_1 \cup K_{n-4})$.
\end{Theorem}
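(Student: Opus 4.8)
The plan is to argue by contradiction, combining the combinatorial characterization of Chen, Egawa and Kano \cite{CEK} with the strict monotonicity of the spectral radius under adding edges. Write $G_0=K_2\vee(2K_1\cup K_{n-4})$, and suppose $G$ has no isolated vertices, $\rho(G)\ge\rho(G_0)$, $G\not\cong G_0$, yet $G$ fails the stated $S(k)$-factor property; we seek a contradiction. First I record that $K_{n-2}$ is a (proper) subgraph of $G_0$ --- the two vertices of $K_2$ together with the $n-4$ vertices of $K_{n-4}$ induce it --- so $\rho(G_0)>n-3$; since a disconnected graph with no isolated vertex has every component of order at most $n-2$ and hence spectral radius at most $n-3$, it follows that $G$ is connected. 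Next, by the characterization in \cite{CEK}, failure of the property produces a set $S\subseteq V(G)$ with $G[S]$ containing an edge and with the number $t:=i(G-S)$ of isolated vertices of $G-S$ large compared with $s:=|S|$; concretely one may take $s\ge2$ and $t\ge k(s-2)+2$ (for $s=2$ this reads $t\ge2$, and $G_0$ is exactly the tight case). Hence $G$ is a spanning subgraph of $H_{s,t}:=K_s\vee\big(tK_1\cup K_{n-s-t}\big)$, so by Perron--Frobenius $\rho(G)\le\rho(H_{s,t})$, with equality only if $G=H_{s,t}$ (using that $G$ is connected).

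It now suffices to prove: (a) if $(s,t)=(2,2)$ then $G\cong G_0$, contradicting $G\not\cong G_0$; and (b) for every admissible pair $(s,t)\neq(2,2)$, meaning $s\ge2$, $t\ge k(s-2)+2$ and $t\le n-s$, one has $\rho(H_{s,t})<\rho(G_0)$, contradicting $\rho(G_0)\le\rho(G)\le\rho(H_{s,t})$. Part (a) is immediate: $H_{2,2}=G_0$, so $\rho(G_0)\le\rho(G)\le\rho(G_0)$ forces $G=G_0$. For (b), use that $H_{s,t}$ has the equitable vertex partition $(V(K_s),V(tK_1),V(K_{n-s-t}))$, so $\rho(H_{s,t})$ equals the largest eigenvalue of the quotient matrix
$$B_{s,t}=\begin{pmatrix} s-1 & t & n-s-t \\ s & 0 & 0 \\ s & 0 & n-s-t-1 \end{pmatrix},$$
whose characteristic polynomial $f_{s,t}(x)=\det(xI-B_{s,t})$ is an explicit monic cubic. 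Since, for fixed $s$, replacing $t$ by $t+1$ deletes $n-s-t-1$ edges from $H_{s,t}$, the quantity $\rho(H_{s,t})$ is strictly decreasing in $t$ whenever $n-s-t\ge2$; hence for each $s$ the largest value of $\rho(H_{s,t})$ over admissible $t$ is attained at $t=\max\{2,k(s-2)+2\}$, i.e.\ at $t=2$ when $s=2$ (already handled) and at $t=k(s-2)+2$ when $s\ge3$. Thus (b) reduces to the family of inequalities $\rho\big(H_{s,\,k(s-2)+2}\big)<\rho(G_0)$ for $3\le s\le\frac{n+2k-2}{k+1}$.

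To establish these I would compare the cubics $f_{s,\,k(s-2)+2}(x)$ and $f_{2,2}(x)$: both are monic, so their difference is a quadratic in $x$ with coefficients polynomial in $s,k,n$, and the goal is to show, using the hypothesis $n\ge\frac32k+7$ together with $\rho(G_0)>n-3$, that $f_{s,\,k(s-2)+2}(x)-f_{2,2}(x)>0$ for all $x\ge n-3$. Because $f_{2,2}$ is strictly increasing on $[\rho(G_0),\infty)$ with $f_{2,2}(\rho(G_0))=0$, this gives $f_{s,\,k(s-2)+2}(x)>0$ for all $x\ge\rho(G_0)$, whence the largest root $\rho\big(H_{s,\,k(s-2)+2}\big)$ is strictly less than $\rho(G_0)$. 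A short additional step treats all $s\ge3$ at once --- either by showing $\rho\big(H_{s,\,k(s-2)+2}\big)$ is itself decreasing in $s$ on the relevant range, or by verifying the quadratic sign for general $s$ directly. I expect the main obstacle to be precisely this last estimate: carrying out the cubic (equivalently, quadratic-difference) comparison uniformly in the barrier size $s$ and in the star parameter $k$, whose admissible range is linked to $n$ via $n\ge\frac32k+7$, and extracting from $f_{2,2}$ a bound on $\rho(G_0)$ sharp enough for the sign to come out right; the boundary configurations (small cliques $K_{n-s-t}$, and $s$ near its upper limit) should be checked by hand.
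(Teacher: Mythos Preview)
Your overall architecture --- contradiction via the Chen--Egawa--Kano characterization, embedding into $K_s\vee(tK_1\cup K_{n-s-t})$, equitable quotient matrix, cubic comparison --- matches the paper's. But there is a genuine gap in how you invoke \cite{CEK}. The characterization in Lemma~\ref{CEK} has \emph{two} failure modes: either there is a set $S$ with $G[S]$ not empty and $i(G-S)\ge k|S|-2k+2$, \emph{or} there is a set $S$ with $G[S]$ empty (an independent set, possibly a single vertex) and $i(G-S)\ge k|S|+1$. You have silently assumed the first alternative always occurs, and built the whole argument on $t\ge k(s-2)+2$ with $G[S]$ containing an edge. The paper treats the two alternatives separately: the non-empty case is exactly your Lemma-\ref{h4}-type comparison $\rho(H_{s,\,k(s-2)+2})<\rho(G_0)$, while the empty case requires a second family of graphs $sK_1\vee((ks+1)K_1\cup K_{n-(k+1)s-1})$ and its own polynomial comparison (Lemma~\ref{h3}), finishing with the strict inclusion $K_1\vee((k+1)K_1\cup K_{n-k-2})\subsetneq G_0$.

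Your route can be repaired without the second lemma, but you must supply the reduction. If $G[S]$ is independent with $i(G-S)\ge ks+1$, then $s\ge 1$ (since $G$ has no isolated vertices), and any $v\in S$ has a neighbour $u\notin S$ (as $G[S]$ is edgeless); setting $S'=S\cup\{u\}$ gives $G[S']$ containing an edge, $|S'|=s+1\ge 2$, and $i(G-S')\ge i(G-S)-1\ge ks\ge k(s+1)-2k+2$ because $k\ge 2$. That collapses everything into the non-empty case you already handle. Either add this paragraph, or follow the paper and run the independent-$S$ case through its own extremal family; as written, the proposal simply omits half of the case analysis.
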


\begin{Theorem}\label{Cai4}
Let $G$ be a graph of order $n\ge  2k+6$ without isolated vertices. If $q(G)\ge q(K_2\vee(2K_1 \cup K_{n-4}))$, then $G$ has an $S(k)$-factor in which a component being  $K_{1,1}$ or $K_{1,2}$ containing any given edge unless $G\cong K_2\vee(2K_1 \cup K_{n-4})$.
\end{Theorem}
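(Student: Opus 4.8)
The plan is to argue by contradiction, combining the combinatorial characterization of Chen, Egawa and Kano~\cite{CEK} with the standard monotonicity of $q$ under addition of edges. Put $H:=K_2\vee(2K_1\cup K_{n-4})$ and suppose $G$ has order $n\ge 2k+6$, has no isolated vertex, satisfies $q(G)\ge q(H)$, is not isomorphic to $H$, yet admits some edge $e$ such that no $S(k)$-factor of $G$ has $e$ in a component isomorphic to $K_{1,1}$ or $K_{1,2}$. By the Chen--Egawa--Kano characterization there is then a set $S\subseteq V(G)$ with $s:=|S|\ge 2$ for which $i(G-S)\ge k(s-2)+2$, where $i(\cdot)$ counts isolated vertices; put $t:=i(G-S)$, so $k(s-2)+2\le t\le n-s$. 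Since the $t$ isolated vertices of $G-S$ span no edges while the remaining part of $G-S$, the set $S$, and the join between them are at most complete, $G$ is a spanning subgraph of the connected graph
\[
H_{s,t}:=K_s\vee\bigl(tK_1\cup K_{n-s-t}\bigr).
\]
As $Q(H_{s,t})-Q(G)$ is a sum of positive semidefinite rank-one matrices, $q(G)\le q(H_{s,t})$, with equality only if $G=H_{s,t}$ (test the positive Perron vector of $Q(H_{s,t})$ in the Rayleigh quotient). Since $H_{2,2}=H$, it now suffices to prove $q(H_{s,t})\le q(H)$ for every admissible pair $(s,t)$ (that is, $s\ge 2$ and $k(s-2)+2\le t\le n-s$), with equality only when $(s,t)=(2,2)$: this forces $q(H)\le q(G)\le q(H_{s,t})\le q(H)$, hence $G=H_{s,t}=H$, contradicting $G\not\cong H$.

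Next I would eliminate the parameter $t$. For fixed $s$, whenever $n-s-t\ge 2$ the graph $H_{s,t+1}$ is a proper spanning subgraph of $H_{s,t}$ (delete every edge joining one vertex of $K_{n-s-t}$ to the rest of that clique), so $q(H_{s,t+1})<q(H_{s,t})$; and for $n-s-t\le 1$ the graphs $H_{s,t}$ all coincide with the complete split graph $K_s\vee(n-s)K_1$. Hence $q(H_{s,t})$ is maximized over admissible $t$ at $t=t_s:=k(s-2)+2$, and it remains to show $q(H_{s,t_s})<q(H)$ for every $s\ge 3$ with $k(s-2)+2\le n-s$.

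For this comparison I would use the equitable partition of $H_{s,t_s}$ into $V(K_s)$, the $t_s$ vertices of degree $s$, and $V(K_{n-s-t_s})$ (with only two classes when $n-s-t_s\le 1$); since $H_{s,t_s}$ is connected, $q(H_{s,t_s})$ is the largest root of the characteristic polynomial of the corresponding $3\times 3$ (or $2\times 2$) quotient matrix $B_{s,t_s}$ of $Q$, whose entries are explicit in $s,k,n$, and likewise $q(H)$ is the largest root of $B_{2,2}$. Because $Q$ is positive semidefinite, the eigenvalues of $B_{s,t_s}$ are nonnegative, and a trace estimate (using $n\ge 2k+6$) shows its second-largest eigenvalue stays below $2n-6$; hence, to conclude $q(H_{s,t_s})<q(H)$ it is enough to exhibit some $x\le q(H)$ with $\det\bigl(xI-B_{s,t_s}\bigr)>0$, since then $x$ must exceed the Perron root of $B_{s,t_s}$. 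For most parameter ranges the crude bound $q(H)>q(K_{n-2})=2n-6$ (valid as $H\supsetneq K_{n-2}$ and $H$ is connected) does the job, leaving the polynomial inequality $\det\bigl((2n-6)I-B_{s,t_s}\bigr)>0$, which one verifies treating $s$ as the running variable and invoking $n-s-t_s\ge 0$, $k\ge 2$, $n\ge 2k+6$. The hard part is the finite family of tight cases where $n$ is close to $2k+6$ and $s$ is near its upper limit, so that $K_{n-s-t_s}$ is tiny or absent (for instance $s=4$ when $n=2k+6$, where $H_{4,2k+2}=K_4\vee(2k+2)K_1$): there $2n-6$ is too weak, and one must compare the Perron roots of $B_{s,t_s}$ and $B_{2,2}$ directly, evaluating the characteristic polynomial of $B_{2,2}$ at an explicit upper estimate for the Perron root of $B_{s,t_s}$. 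Any disconnected or low-order exceptional configuration flagged by the Chen--Egawa--Kano theorem is dealt with by the same quotient-matrix estimates together with the bound $q\le\max_{uv\in E}(d_u+d_v)$; I expect these tight cases to be the only real difficulty.
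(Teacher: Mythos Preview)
Your overall strategy---embed $G$ in $K_s\vee(t_sK_1\cup K_{n-s-t_s})$, use the equitable $3\times 3$ quotient matrix, and compare characteristic polynomials at $x=2n-6<q(H)$---matches the paper's approach; the paper packages the polynomial comparison as a separate lemma (Lemma~\ref{h6}) and finds that the inequality $h(2n-6)>0$ goes through uniformly for all admissible $s\ge 3$ under $n\ge 2k+6$, so your anticipated ``tight cases'' do not in fact need separate treatment.

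There is, however, a genuine gap at the very start. Your reading of the Chen--Egawa--Kano characterization is incomplete: Lemma~\ref{CEK} is a two-branch statement, and failure means there exists $S$ with either (i) $G[S]$ empty and $i(G-S)\ge ks+1$, or (ii) $G[S]$ non-empty and $i(G-S)\ge ks-2k+2$. You have effectively assumed (ii), writing $i(G-S)\ge k(s-2)+2$ and asserting $s\ge 2$. For $s\ge 2$ your merged bound is harmless (case (i) gives a larger $t$, hence a smaller $q(H_{s,t})$, and $G[S]\subseteq K_s$ regardless), but case (i) can occur with $s=1$: the graph $K_1\vee((k+1)K_1\cup K_{n-k-2})$ has no isolated vertices and fails the CEK test at its centre vertex. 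Your framework does not cover this, since $t_1=k(1-2)+2=2-k\le 0$ is meaningless.

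The paper handles the empty-$G[S]$ branch separately: it bounds $G$ by $sK_1\vee((ks+1)K_1\cup K_{n-(k+1)s-1})$, shows via another quotient-matrix lemma (Lemma~\ref{h5}) that this is maximized at $s=1$, and then observes that $K_1\vee((k+1)K_1\cup K_{n-k-2})$ is a proper spanning subgraph of $H=K_2\vee(2K_1\cup K_{n-4})$, so its $q$ is strictly smaller. You need either this argument or some substitute for the $s=1$ case; the vague remark about ``exceptional configurations flagged by the Chen--Egawa--Kano theorem'' at the end does not supply it.
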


\section{Preliminaries}

For a graph $G$, let $V(G)$ be the vertex set of $G$ and $E(G)$ the edge set of $G$.
For  $v\in V(G)$, the neighborhood  $N_G(v)$ of $v$ is the set of vertices adjacent to $v$ in $G$, and the degree of $v$, denoted by $d_G(v)$, is the number $|N_G(v)|$.
For any $S\subseteq V(G)$, let $G[S]$ be the subgraph of $G$ induced by $S$, and write $G-S=G[V(G)\setminus S]$ if $S\ne V(G)$.
For two vertex disjoint graphs $G_1$ and $G_2$,  $G_1\cup G_2$ denotes the disjoint union of $G_1$ and $G_2$, $G_1\vee G_2$ denotes the join of  $G_1$ and $G_2$, which is obtained from $G_1\cup G_2$
by adding all possible edges between any vertex of $G_1$ and any vertex of $G_2$. For positive integer $k$ and a graph $G$, $kG$ denotes the graphs consisting of $k$ vertex disjoint copies of $G$.
Denote by $K_n$ the complete graph of order $n$.

The spectral radius of a graph $G$ is the largest eigenvalue of the adjacency matrix of $G$,
which is defined as the symmetric matrix $\mathbf{A}(G)=(a_{uv})_{u,v\in V(G)}$, where $a_{uv}=1$ if $u$ and $v$ are adjacent, and $a_{uv}=0$ otherwise. The signless Laplacian spectral radius of a graph $G$
is the largest eigenvalue of its signless Laplacian matrix $\mathbf{Q}(G)=\mathbf{D}(G)+\mathbf{A}(G)$,
where $\mathbf{D}(G)$ is the degree diagonal matrix of $G$.

For a square nonnegative matrix $\mathbf{M}$, let $\lambda(\mathbf{M})$ be its the spectral radius (maximum modulus of the eigenvalues), which is an eigenvalue of $\mathbf{M}$ by the Perron-Frobenius theorem.
In particular, we have $\rho(G)=\lambda(\mathbf{A})$ and $q(G)=\lambda(\mathbf{Q})$ for a graph $G$,

The following lemma, following from  the Perron-Frobenius theorem, is well known.

\begin{Lemma} \label{inequit} 
Let $G$ be a graph and $u$ and $v$ two distinct vertices that are not adjacent in $G$. Then
$\rho(G+uv)\ge \rho(G)$ and $q(G+uv)\ge q(G)$. Both inequalities are strict if
$G+uv$ is connected.
\end{Lemma}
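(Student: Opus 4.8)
\emph{Proof proposal.} The plan is to exhibit $\mathbf{A}(G+uv)$ and $\mathbf{Q}(G+uv)$ as entrywise-nonnegative perturbations of $\mathbf{A}(G)$ and $\mathbf{Q}(G)$ and then invoke monotonicity of the Perron eigenvalue. Write $\mathbf{A}(G+uv)=\mathbf{A}(G)+\mathbf{E}$, where $\mathbf{E}$ has a $1$ in each of the positions $(u,v)$ and $(v,u)$ and $0$ elsewhere; similarly $\mathbf{Q}(G+uv)=\mathbf{Q}(G)+\mathbf{E}'$, where $\mathbf{E}'$ has in addition a $1$ in each diagonal position $(u,u)$ and $(v,v)$. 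In both cases the perturbation is nonnegative and nonzero, so $\mathbf{A}(G+uv)\ge\mathbf{A}(G)\ge\mathbf{0}$ and $\mathbf{Q}(G+uv)\ge\mathbf{Q}(G)\ge\mathbf{0}$ entrywise, and since $\lambda(\cdot)$ is nondecreasing under entrywise increase among nonnegative matrices (a standard consequence of the Perron--Frobenius theorem), $\rho(G+uv)=\lambda(\mathbf{A}(G+uv))\ge\lambda(\mathbf{A}(G))=\rho(G)$ and $q(G+uv)\ge q(G)$. Equivalently one can argue by Rayleigh quotients: for a nonnegative unit Perron vector $\mathbf{x}$ of $\mathbf{A}(G)$, $\rho(G+uv)\ge\mathbf{x}^{\top}\mathbf{A}(G+uv)\mathbf{x}=\rho(G)+2x_ux_v\ge\rho(G)$, and for a nonnegative unit Perron vector of $\mathbf{Q}(G)$, $q(G+uv)\ge q(G)+(x_u+x_v)^2\ge q(G)$.

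For the strict inequalities, assume $G+uv$ is connected. Then both $\mathbf{A}(G+uv)$ and $\mathbf{Q}(G+uv)$ are irreducible, since a symmetric nonnegative matrix is irreducible exactly when the graph on its off-diagonal support is connected, and that graph is $G+uv$ in each case. So it suffices to prove the elementary fact below and apply it with $(\mathbf{N},\mathbf{M})=(\mathbf{A}(G+uv),\mathbf{A}(G))$ and with $(\mathbf{N},\mathbf{M})=(\mathbf{Q}(G+uv),\mathbf{Q}(G))$: if $\mathbf{N}$ is irreducible and nonnegative, $\mathbf{M}$ is nonnegative, $\mathbf{N}\ge\mathbf{M}$ and $\mathbf{N}\ne\mathbf{M}$, then $\lambda(\mathbf{N})>\lambda(\mathbf{M})$. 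To see this, let $\mathbf{y}>\mathbf{0}$ be a (left) Perron eigenvector of $\mathbf{N}$ and let $\mathbf{x}\ge\mathbf{0}$, $\mathbf{x}\ne\mathbf{0}$, be a Perron eigenvector of $\mathbf{M}$; then $\lambda(\mathbf{M})\,\mathbf{y}^{\top}\mathbf{x}=\mathbf{y}^{\top}\mathbf{M}\mathbf{x}\le\mathbf{y}^{\top}\mathbf{N}\mathbf{x}=\lambda(\mathbf{N})\,\mathbf{y}^{\top}\mathbf{x}$ with $\mathbf{y}^{\top}\mathbf{x}>0$, so $\lambda(\mathbf{M})\le\lambda(\mathbf{N})$. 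If equality held, then $\mathbf{y}^{\top}(\mathbf{N}-\mathbf{M})\mathbf{x}=0$ with $\mathbf{y}>\mathbf{0}$ and $(\mathbf{N}-\mathbf{M})\mathbf{x}\ge\mathbf{0}$, forcing $(\mathbf{N}-\mathbf{M})\mathbf{x}=\mathbf{0}$, hence $\mathbf{N}\mathbf{x}=\mathbf{M}\mathbf{x}=\lambda(\mathbf{M})\mathbf{x}=\lambda(\mathbf{N})\mathbf{x}$; then $\mathbf{x}$ is a nonnegative Perron eigenvector of the irreducible matrix $\mathbf{N}$ and therefore $\mathbf{x}>\mathbf{0}$, but $(\mathbf{N}-\mathbf{M})\mathbf{x}=\mathbf{0}$ with $\mathbf{x}>\mathbf{0}$ and $\mathbf{N}\ne\mathbf{M}$ is impossible, since a nonzero row of the nonnegative matrix $\mathbf{N}-\mathbf{M}$ against a positive vector yields a positive coordinate.

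The only real subtlety is the strictness claim when $G$ itself is disconnected: then $\mathbf{A}(G)$ and $\mathbf{Q}(G)$ are reducible, their Perron vectors may vanish at $u$ or at $v$, and the quick Rayleigh-quotient computation no longer forces a strict gain. Routing the comparison through irreducibility of the \emph{larger} matrices $\mathbf{A}(G+uv)$ and $\mathbf{Q}(G+uv)$, as above, avoids this entirely, and everything else is routine bookkeeping. (If one only needs the lemma for connected $G$, the Perron vector of $G$ is strictly positive, so the extra terms $2x_ux_v$ and $(x_u+x_v)^2$ are already positive and the one-line argument suffices.)
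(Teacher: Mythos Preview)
Your proof is correct and follows the same Perron--Frobenius route the paper indicates; the paper itself does not prove this lemma but merely states it as ``well known'' and ``following from the Perron--Frobenius theorem,'' so your argument simply supplies the details that the authors omit. In particular, your care in deducing strictness from the irreducibility of $\mathbf{A}(G+uv)$ and $\mathbf{Q}(G+uv)$ (rather than from positivity of a Perron vector of $G$) is exactly the right way to handle the case where $G$ is disconnected but $G+uv$ is connected.
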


For a graph $G$, let $V(G)=V_1\cup \dots\cup V_s$ be a partition of $V(G)$. For $1\le i<j\le s$, set $\mathbf{B}_{ij}$ denotes  the submatrix of $\mathbf{H}(G)\in \{\mathbf{A}(G), \mathbf{Q}(G)\}$ with rows corresponding to vertices in $V_i$ and columns corresponding to vertices in $V_j$.
The matrix $\mathbf{B}=(b_{ij})$, where $b_{ij}$ equals to the average row sums of $\mathbf{B}_{ij}$,
is called the quotient matrix of $\mathbf{H}(G)$ with respect to the partition $V_1\cup \dots \cup V_s$. Furthermore, if $\mathbf{B}_{ij}$ has constant row sum, then we say $B$ is an equitable quotient matrix (with respect to the above partition of $V(G)$). As an immediate consequence of \cite[Lemma 2.3.1]{BH}, one gets the following lemma.

\begin{Lemma} \label{equit}
If $\mathbf{B}$ is an equitable quotient matrix of $\mathbf{H}(G)\in \{\mathbf{A}(G), \mathbf{Q}(G)\}$, then  the eigenvalues of $\mathbf{B}$ are also eigenvalues of $\mathbf{H}(G)$, and  $\lambda(\mathbf{H}(G)) $ is equal to the largest eigenvalue of $\mathbf{B}$.
\end{Lemma}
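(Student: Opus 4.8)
The plan is to realise the quotient relation concretely via the characteristic matrix of the partition. Write $V(G)=V_1\cup\dots\cup V_s$ for the equitable partition and let $\mathbf{S}\in\{0,1\}^{n\times s}$ be the matrix whose $(u,j)$-entry is $1$ exactly when $u\in V_j$. The first thing I would verify is the identity $\mathbf{H}(G)\,\mathbf{S}=\mathbf{S}\,\mathbf{B}$. Indeed, fixing $u\in V_i$, the $(u,j)$-entry of $\mathbf{H}(G)\mathbf{S}$ is $\sum_{w\in V_j}\mathbf{H}(G)_{uw}$, which is the $u$-th row sum of the block $\mathbf{B}_{ij}$; since the partition is equitable this row sum is constant over $u\in V_i$ and hence equals its average $b_{ij}$, while the $(u,j)$-entry of $\mathbf{S}\mathbf{B}$ is $b_{ij}$ as well.

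From this identity the first assertion is immediate: if $\mathbf{B}x=\mu x$ with $x\neq 0$, then $\mathbf{H}(G)(\mathbf{S}x)=\mathbf{S}(\mathbf{B}x)=\mu\,\mathbf{S}x$, and $\mathbf{S}x\neq 0$ because the columns of $\mathbf{S}$ have pairwise disjoint supports and are therefore linearly independent; so $\mu$ is an eigenvalue of $\mathbf{H}(G)$. Next, $\mathbf{B}$ is entrywise nonnegative, its entries being averages of entries of the nonnegative matrix $\mathbf{H}(G)$, so $\lambda(\mathbf{B})$ is genuinely an eigenvalue of $\mathbf{B}$ by the Perron--Frobenius theorem, and by what was just shown it is an eigenvalue of $\mathbf{H}(G)$; hence $\lambda(\mathbf{B})\le\lambda(\mathbf{H}(G))$.

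For the reverse inequality I would use that $\mathbf{H}(G)\in\{\mathbf{A}(G),\mathbf{Q}(G)\}$ is symmetric. By the displayed identity $\mathrm{col}(\mathbf{S})$ is $\mathbf{H}(G)$-invariant, so $\mathrm{col}(\mathbf{S})^{\perp}$ is $\mathbf{H}(G)$-invariant too, and in the basis given by the columns of $\mathbf{S}$ the restriction of $\mathbf{H}(G)$ to $\mathrm{col}(\mathbf{S})$ is precisely $\mathbf{B}$; thus, as multisets, the spectrum of $\mathbf{H}(G)$ is the union of the spectrum of $\mathbf{B}$ and the spectrum of $\mathbf{H}(G)$ on $\mathrm{col}(\mathbf{S})^{\perp}$. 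Since $\mathbf{H}(G)$ is symmetric and nonnegative, its largest eigenvalue $\lambda(\mathbf{H}(G))$ admits a nonnegative eigenvector $v$ (replace any unit eigenvector $w$ for this eigenvalue by $|w|$ and use the Rayleigh quotient). If $\lambda(\mathbf{H}(G))$ were not an eigenvalue of $\mathbf{B}$, then the corresponding eigenspace would lie in $\mathrm{col}(\mathbf{S})^{\perp}$, so $v$ would satisfy $\sum_{u\in V_i}v_u=0$ for every $i$; as $v\ge 0$ this forces $v=0$, a contradiction. Hence $\lambda(\mathbf{H}(G))$ is an eigenvalue of $\mathbf{B}$, and combined with the previous paragraph $\lambda(\mathbf{H}(G))=\lambda(\mathbf{B})$, which is the largest eigenvalue of $\mathbf{B}$ because $\mathbf{B}\ge 0$.

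The routine part is the bookkeeping with $\mathbf{S}$; the only genuinely delicate point is this last equality of spectral radii, since the eigenvalue containment $\mathrm{spec}(\mathbf{B})\subseteq\mathrm{spec}(\mathbf{H}(G))$ alone yields just one inequality and closing the gap needs both the symmetry of $\mathbf{H}(G)$ and the existence of a nonnegative top eigenvector. This is exactly why the lemma is recorded as a consequence of \cite[Lemma 2.3.1]{BH}: in the write-up one may simply cite that reference for the containment and append the short Perron--Frobenius argument above for the spectral-radius statement.
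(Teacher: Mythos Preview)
Your argument is correct: the intertwining identity $\mathbf{H}(G)\mathbf{S}=\mathbf{S}\mathbf{B}$ gives the eigenvalue containment, and the symmetry of $\mathbf{H}(G)$ together with the existence of a nonnegative Perron eigenvector forces $\lambda(\mathbf{H}(G))$ to lie in the spectrum of $\mathbf{B}$, closing the gap. The paper does not actually supply a proof of this lemma at all --- it simply records it as an immediate consequence of \cite[Lemma 2.3.1]{BH} --- so your write-up is a self-contained version of precisely the standard argument that reference contains; you even note this yourself in the final paragraph.
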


Given a connected graph $G$ and  $\alpha=0,1$, we denote by $\lambda_\alpha(G)$ the spectral radius of the matrix $\alpha\mathbf{D}(G)+\mathbf{A}(G)$. By Perron-Frobenius theorem, there is a unique unit positive eigenvector corresponding to $\lambda_\alpha(G)$, which is called the Perron vector.
It is known that if there is an automorphism $\phi$ of $G$ such that $\phi(u)=v$, then the entries of the Perron vector at $u$ and $v$ are equal.
The following is known in \cite{WXH} when $\alpha=0$ and in \cite{HZ} when $\alpha=1$, see also \cite{NP}.

\begin{Lemma} \label{GE} Let $G$ be a connected graph and $u$ and $v$ be two  vertices of $G$.
Let $X$ be the perron vector of $\alpha \mathbf{D}(G)+\mathbf{A}(G)$ with $x_u\ge x_v$, where $\alpha=\{0,1\}$. Suppose that $N_G(v)\setminus (N_G(u)\cup \{u\})\ne \emptyset$. Then for any nonempty $N\subseteq  N_G(v)\setminus (N_G(u)\cup \{u\})$,
\[
\lambda_\alpha(G-\{vw: w\in N\}+\{uw: w\in N\})\ge\lambda_{\alpha}(G).
\]
\end{Lemma}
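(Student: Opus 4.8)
### Proof Proposal for Lemma~\ref{GE}

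The plan is to compare the two matrices $\mathbf{M}=\alpha\mathbf{D}(G)+\mathbf{A}(G)$ and $\mathbf{M}'=\alpha\mathbf{D}(G')+\mathbf{A}(G')$, where $G'=G-\{vw:w\in N\}+\{uw:w\in N\}$ is the graph obtained by the prescribed edge-shift, and to use the Perron vector $X$ of $\mathbf{M}$ as a test vector via the Rayleigh-quotient characterization $\lambda_\alpha(G')\ge X^{\top}\mathbf{M}'X$ (since $X$ is a unit vector). Because $N\subseteq N_G(v)\setminus(N_G(u)\cup\{u\})$, each $w\in N$ is adjacent to $v$ but not to $u$ in $G$, so the shift removes exactly the edges $vw$ and adds exactly the edges $uw$; in particular $u$ and $v$ are not identified and no multi-edges are created, so $G'$ is a genuine simple graph on the same vertex set, and $uv\in E(G)$ iff $uv\in E(G')$.

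The key computation is $X^{\top}\mathbf{M}'X - X^{\top}\mathbf{M}X = X^{\top}(\mathbf{M}'-\mathbf{M})X$. First I would handle the adjacency part: replacing each edge $vw$ by $uw$ changes the quadratic form $\sum_{ij}a_{ij}x_ix_j$ by $2\sum_{w\in N}x_w(x_u-x_v)$, which is $\ge 0$ since $x_u\ge x_v\ge 0$ and $x_w\ge 0$ (here I use that $X$ is a positive Perron vector). When $\alpha=1$ there is an additional diagonal contribution from the degree matrix: for each $w\in N$ the degree of $w$ is unchanged, $d(v)$ decreases by $|N|$, and $d(u)$ increases by $|N|$, so the diagonal part changes the quadratic form by $|N|(x_u^2-x_v^2)\ge 0$. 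Hence in both cases $\alpha\in\{0,1\}$ we get $X^{\top}\mathbf{M}'X\ge X^{\top}\mathbf{M}X = \lambda_\alpha(G)$, and combining with the Rayleigh bound yields $\lambda_\alpha(G')\ge\lambda_\alpha(G)$, as desired.

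The main subtlety — really the only place one must be a little careful — is making sure the Rayleigh-quotient inequality is applied correctly: $\lambda_\alpha(G')$ is the largest eigenvalue of a symmetric matrix, so $\lambda_\alpha(G')=\max_{\|Y\|=1}Y^{\top}\mathbf{M}'Y\ge X^{\top}\mathbf{M}'X$ for the particular unit vector $X$, and no connectivity or irreducibility of $G'$ is needed for the non-strict inequality claimed here. One should also note that $G'$ could in principle be disconnected (if the shift isolates $v$ or splits off a component), but this does not affect the argument since we only need the largest eigenvalue of $\mathbf{M}'$ and the test-vector bound is valid regardless. If a strict inequality were wanted one would invoke $N\ne\emptyset$ together with $x_w>0$ and, in the $\alpha=0$ case, the eigenvalue equation for $\mathbf{M}$ at $v$ to force $x_u>x_v$ unless $G'\cong G$; but as stated the lemma only asserts ``$\ge$,'' so the computation above suffices.
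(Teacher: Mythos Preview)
Your proof is correct and is exactly the standard Rayleigh-quotient/edge-switching argument. The paper does not actually prove this lemma: it states it as a known result and cites \cite{WXH} for $\alpha=0$, \cite{HZ} for $\alpha=1$, and \cite{NP}, whose proofs proceed precisely as you do---use the Perron vector $X$ of $G$ as a test vector for $G'$ and observe that the adjacency contribution changes by $2\sum_{w\in N}x_w(x_u-x_v)\ge 0$ and the degree-diagonal contribution (when $\alpha=1$) by $|N|(x_u^2-x_v^2)\ge 0$. Your remarks that only the non-strict inequality is claimed and that connectivity of $G'$ is irrelevant for the Rayleigh bound are also on point.
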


For the graph $G=K_s\vee (K_{n_1}\cup \dots \cup K_{n_t})$ we call the graph $K_s$ appearing first
the outer copy of $G$, and
the $i$-th graph in
$K_{n_1}\cup \dots \cup K_{n_t}$ the $i$-th inner copy of $G$, where $i=1,\dots, t$.

\begin{Lemma} \label{SP}
For positive integers $s, t, n_1, \dots n_t$ with
$t\ge 2$ and $n_1\le \dots \le n_t$, let $G=K_s\vee (K_{n_1}\cup \dots \cup K_{n_t})$ and $X$ be the Perron vector of $\alpha \mathbf{D}(G)+\mathbf{A}(G)$ with $\alpha=\{0,1\}$ , where for $i=1,\dots, t$, $x_i$ is the entry of $X$ at any vertex of the $i$-th inner copy of $G$. Then $x_i\le x_{i+1}$ for $i=1,\dots, t-1$.
\end{Lemma}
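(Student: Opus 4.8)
The plan is to reduce the statement to a one-line comparison of the coefficients appearing in the eigenvalue equations at the inner copies. First I would record the structural facts. Since $s\ge 1$ and $t\ge 2$, the graph $G=K_s\vee(K_{n_1}\cup\dots\cup K_{n_t})$ is connected, so by the Perron--Frobenius theorem the Perron vector $X$ of $\mathbf{M}:=\alpha\mathbf{D}(G)+\mathbf{A}(G)$ is entrywise positive. For each $i$, any two vertices of the $i$-th inner copy are interchanged by an automorphism of $G$ that fixes all other vertices, and likewise any two vertices of the outer copy $K_s$ are interchanged by an automorphism. Hence, as recalled before Lemma~\ref{GE}, $X$ is constant on the $i$-th inner copy (with value $x_i>0$) and constant on the outer copy (with value $y>0$).

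Next I would write the eigenvalue equation $\mathbf{M}X=\lambda X$, with $\lambda:=\lambda_\alpha(G)$, at a vertex $v$ of the $i$-th inner copy. Such a $v$ has degree $(n_i-1)+s$, with $n_i-1$ neighbours in its own copy and $s$ neighbours in the outer copy, so
\[
\lambda x_i=\alpha\big((n_i-1)+s\big)x_i+(n_i-1)x_i+s\,y,
\]
that is, $c_i\,x_i=s\,y$, where $c_i:=\lambda-(\alpha+1)(n_i-1)-\alpha s$. Since $s\ge 1$ and $y>0$, the right-hand side $s\,y$ is positive; since $x_i>0$, this forces $c_i>0$ for every $i$.

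Finally, $c_i$ is a strictly decreasing function of $n_i$ (the coefficient $\alpha+1$ is positive), so $n_i\le n_{i+1}$ gives $c_i\ge c_{i+1}>0$. Combining this with $c_i x_i=s\,y=c_{i+1}x_{i+1}$ yields
\[
x_i=\frac{c_{i+1}}{c_i}\,x_{i+1}\le x_{i+1},
\]
which is the claim. I do not expect a genuine obstacle here: the only point requiring care is the positivity $c_i>0$, and that is precisely where the hypotheses $s\ge 1$ (so the outer copy is nonempty) and the connectedness of $G$ (so $y>0$) are used. An alternative route through Lemma~\ref{GE}, shifting the inner-copy edges at the vertex carrying the larger Perron entry, also works, but it is less direct than the eigenvalue-equation computation above.
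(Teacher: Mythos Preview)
Your proof is correct and follows essentially the same route as the paper: write the eigenvalue equation at a vertex of each inner copy to get $c_i x_i = s y$ with $c_i=\lambda-(\alpha+1)(n_i-1)-\alpha s$, then compare $c_i\ge c_{i+1}$ since $n_i\le n_{i+1}$. Your justification of $c_i>0$ via $sy>0$ and $x_i>0$ is in fact a bit cleaner than the paper's, which instead appeals to $\lambda_\alpha(G)>\lambda_\alpha(K_{n_{i+1}})$.
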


\begin{proof}
Denote by $x_0$ the entry of $X$ at any vertex of the out copy of $G$. Then
\[
(\lambda_\alpha(G)-\alpha(n_i-1+s)+1-n_i)x_i=sx_0=(\lambda_{\alpha}(G)-\alpha(n_{i+1}-1+s)+1-n_{i+1})x_{i+1}
\]
for $i=1,\dots, t-1$.
As $n_i\le n_{i+1}$ and $\lambda_{\alpha}(G)> \lambda_{\alpha}(K_{n_{i+1}})=(\alpha+1)(n_{i+1}-1)$, one gets
$x_i\le x_{i+1}$.
\end{proof}

Given a graph $G$,  $o(G)$  the number of odd components of  $G$, and $i(G)$  the number of isolated vertices of  $G$.

\section{Graphs with a matching and $1$-factor containing any given edge}

We need the following lemma.

\begin{Lemma}\label{matching} \cite{LGH}
Let $G$ be a graph on $n$ vertices. Let $k$ be an integer with $0\le k\le n$ and $k\equiv n \pmod 2$. Then $G$ has a matching of size $\frac{n-k}{2}$  containing any given edge if and only if
$o(G-S)\le |S|+k$
for all $S\subset V(G)$ and $o(G-S)= |S|+k$ implies that $S$ is an independent set.
\end{Lemma}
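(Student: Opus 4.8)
The plan is to convert the phrase ``containing any given edge'' into a vertex‑deletion condition and then apply the deficiency form of Tutte's theorem. Fix an edge $uv$ of $G$. A matching $M$ of $G$ with $uv\in M$ and $|M|=\frac{n-k}{2}$ is nothing but the edge $uv$ together with a matching $M\setminus\{uv\}$ of $G-u-v$ of size $\frac{n-k}{2}-1=\frac{(n-2)-k}{2}$, and conversely any matching of $G-u-v$ of that size extends to such an $M$ by adjoining $uv$. Since $n-2\equiv k\pmod 2$, the deficiency form of Tutte's theorem applied to $H:=G-u-v$ says that $H$ has a matching of size $\frac{(n-2)-k}{2}$ if and only if $o(H-T)\le |T|+k$ for all $T\subseteq V(H)$. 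Writing $S:=T\cup\{u,v\}$, so that $|S|=|T|+2$ and $H-T=G-S$, and letting $uv$ run over all edges (a set contains both ends of some edge precisely when it is not independent), we conclude that $G$ has a matching of size $\frac{n-k}{2}$ containing any given edge if and only if
\[
o(G-S)\le |S|+k-2\qquad\text{for every non-independent }S\subseteq V(G).\qquad(\star)
\]

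It then remains to show that $(\star)$ is equivalent to the stated condition, namely that $o(G-S)\le |S|+k$ for all $S$ with equality forcing $S$ to be independent. Throughout I would use the parity fact that $G-S$ has $n-|S|\equiv k-|S|\pmod 2$ vertices, whence $o(G-S)-|S|-k$ is even. For ``stated condition $\Rightarrow(\star)$'': if $S$ is not independent then $o(G-S)\le |S|+k$ with the value $|S|+k$ not attained, so $o(G-S)\le |S|+k-1$, and evenness improves this to $o(G-S)\le |S|+k-2$. For ``$(\star)\Rightarrow$ stated condition'': if $S$ is not independent, $(\star)$ gives $o(G-S)\le |S|+k-2<|S|+k$; if $S$ is independent and $o(G-S)\ge |S|+k+2$ (the failure mode, by parity), pick a vertex $s\in S$ having a neighbour $w$ — then $w\notin S$, the set $S':=S\cup\{w\}$ is not independent, and since deleting a single vertex from a graph drops the number of odd components by at most one (inspect the component of $w$: an odd one is destroyed but its remnant has even order; an even one leaves an odd-order remnant with at least one odd component), we get $o(G-S')\ge o(G-S)-1\ge |S'|+k$, contradicting $(\star)$. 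The only remaining possibility is that every vertex of $S$ is isolated in $G$, so $o(G-S)=o(G)-|S|$; then applying $(\star)$ to $\{x,y\}$ for any edge $xy$ of $G$ gives $o(G)\le o(G-x-y)+1\le k+1$, hence $o(G)\le k$ by parity and $o(G-S)\le |S|+k$ follows, the edgeless case being degenerate.

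\textbf{Main obstacle.} I expect the delicate part to be the equivalence of $(\star)$ with the stated condition. The crux is realising that the clause ``$o(G-S)=|S|+k$ implies $S$ independent'' encodes exactly the parity slack that lets one trade the bound $|S|+k$ (attained only at independent $S$) for the stronger bound $|S|+k-2$ at non‑independent $S$, together with the trick of turning an independent near‑counterexample into a non‑independent one by appending a single neighbour while losing at most one odd component. The reduction via Tutte's deficiency theorem and the parity bookkeeping should otherwise be routine.
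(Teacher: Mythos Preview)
The paper does not prove this lemma at all: it is quoted as a known result from \cite{LGH} (Little, Grant, Holton) and used as a black box in the proofs of Theorems~\ref{Cai1} and~\ref{Cai2}. So there is no ``paper's own proof'' to compare against.

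Your argument is nonetheless sound and is essentially the standard route to this result. The reduction ``a matching of size $\frac{n-k}{2}$ through $uv$'' $\Leftrightarrow$ ``a matching of size $\frac{(n-2)-k}{2}$ in $G-u-v$'' is immediate, and the Berge--Tutte deficiency formula then converts this to $(\star)$ exactly as you describe. The equivalence between $(\star)$ and the stated condition is also correct: the parity observation $o(G-S)\equiv |S|+k\pmod 2$ does the work in one direction, and in the other your trick of enlarging an independent offending set $S$ by one neighbour $w$ (so that $S'=S\cup\{w\}$ is non-independent while $o(G-S')\ge o(G-S)-1$) is the right idea and your component analysis justifying the drop of at most one is fine. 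A small remark: in the residual case where every vertex of $S$ is isolated, your inequality $o(G)\le o(G-x-y)+1$ can in fact be sharpened to $o(G)\le o(G-x-y)$ (the component containing the edge $xy$ has the same parity after deleting both endpoints), but your weaker bound together with parity already yields $o(G)\le k$, which is all you need. The edgeless case is indeed degenerate and harmless for the purposes of the paper.
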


\subsection{Spectral radius}

\begin{Lemma} \label{h1} Let $n, s, k$ be positive integers with $n\ge \max\{5k+6,2s+k\}$, $s\ge 2$ and $0\le k\le n$. For fixed $n$ and $k$,
$\rho(K_s\vee ((s+k-1)K_1\cup K_{n-2s-k+1}))$ is uniquely maximized when $s=2$.
\end{Lemma}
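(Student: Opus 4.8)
The plan is to compare $\rho(K_s \vee ((s+k-1)K_1 \cup K_{n-2s-k+1}))$ with $\rho(K_{s+1} \vee ((s+k)K_1 \cup K_{n-2s-k-1}))$ and show the former is strictly larger whenever $s \ge 2$, which by induction gives the claim. Write $G_s = K_s \vee ((s+k-1)K_1 \cup K_{n-2s-k+1})$. The natural first step is to record an equitable partition of $V(G_s)$ into three parts: the $s$ vertices of the outer clique, the $s+k-1$ isolated vertices, and the $n-2s-k+1$ vertices of the inner clique. By Lemma~\ref{equit}, $\rho(G_s)$ is the largest root of the characteristic polynomial $f_s(x)$ of the corresponding $3\times 3$ quotient matrix
\[
\mathbf{B}_s = \begin{pmatrix} s-1 & s+k-1 & n-2s-k+1 \\ s & 0 & 0 \\ s & 0 & n-2s-k \end{pmatrix}.
\]

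Next I would set up the comparison. Since $G_{s+1}$ is obtained from $G_s$ by a specific rearrangement of vertices and edges — one vertex moves from the inner clique into the outer clique, and simultaneously the number of isolated vertices grows by one while the inner clique shrinks by two — it is cleanest to work directly with the quotient polynomials rather than with vertex-switching lemmas. So I would compute $f_s(x)$ explicitly as a cubic in $x$ with coefficients polynomial in $n, s, k$, do the same for $f_{s+1}(x)$, and examine the sign of $f_s(\rho(G_{s+1})) $ (or equivalently $f_{s+1}(\rho(G_s))$). The key point is that $\rho(G_s) \ge n - 2s - k$ (it exceeds the spectral radius of the inner clique $K_{n-2s-k+1}$, which is $n-2s-k$), and in fact $\rho(G_s)$ is of order $n$ for the relevant range; I would pin down a usable lower bound such as $\rho(G_s) > n - 2s - k + \epsilon$ or, better, compare with the dominant behavior $\rho(G_s) \approx n - 2s - k + \Theta(s^2/n)$. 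Evaluating $f_{s+1}$ at this lower bound and showing it is negative (so the largest root of $f_{s+1}$, namely $\rho(G_{s+1})$, lies below $\rho(G_s)$) should finish the induction; the hypothesis $n \ge \max\{5k+6, 2s+k\}$ is exactly what is needed to keep all the relevant quantities positive and to make the leading-order term dominate the error terms.

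An alternative, possibly slicker route uses Lemma~\ref{GE}. In $G_{s+1}$, pick a vertex $u$ of the outer clique and a vertex $v$ of the inner clique; since the inner clique of $G_{s+1}$ has order $n-2s-k-1 \ge 1$ and $v$ has neighbors (its inner-clique mates) not adjacent to $u$, one can try to transfer edges from $v$ to $u$ and relate the result to $G_s$. However, the obstruction is that moving from $G_{s+1}$ to $G_s$ is not a single edge-transfer: it both enlarges the inner clique by removing isolated-vertex status from vertices and shrinks the outer clique. So Lemma~\ref{GE} likely needs to be applied in two stages with an intermediate graph, and bookkeeping the orders becomes delicate. For this reason I expect the quotient-polynomial computation to be the more robust path, even though it is more arithmetic-heavy.

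The main obstacle, as usual for these "uniquely maximized at the extreme" lemmas, is the sign analysis of $f_{s+1}$ evaluated at the lower bound for $\rho(G_s)$: the cubic coefficients are messy polynomials in three parameters, and one must isolate the dominant monomial (roughly a term of size $n^2 \cdot (\text{positive})$ against lower-order corrections) and verify the inequality cleanly under $n \ge 5k+6$ and $s \ge 2$. A secondary subtlety is establishing a sharp-enough lower bound on $\rho(G_s)$: too crude a bound (e.g.\ just $\rho(G_s) \ge n - 2s - k$) may not suffice because $f_{s+1}$ could be positive there, so one may need the refined estimate $\rho(G_s) > n-2s-k + \frac{s(s+k-1)}{\,n-2s-k\,} \cdot(1+o(1))$ coming from the quotient matrix itself, which I would obtain by plugging a suitable trial value into $f_s$ and checking its sign. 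Once these two estimates are in hand, the induction on $s$ down to $s=2$ is immediate, and strictness follows since each $G_{s+1}$ is connected so Lemma~\ref{inequit}-type strictness (or strict inequality in the polynomial comparison) applies.
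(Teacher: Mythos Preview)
Your setup via the equitable partition and the quotient polynomial $f_s$ matches the paper exactly. The divergence is in the comparison strategy: you propose an induction comparing $f_s$ to $f_{s+1}$, whereas the paper compares $f_s$ directly to $f_2$ by computing $h(x)=(f_s(x)-f_2(x))/(s-2)$ and showing $h(x)>0$ for all $x\ge n-k-2$. This direct route is substantially simpler for two reasons. First, it needs only a single \emph{fixed} lower bound, namely $\rho(H_2)>\rho(K_{n-k-1})=n-k-2$, rather than the $s$-dependent bound $\rho(G_s)>n-2s-k$ that you yourself flag as possibly too crude for the step-by-step comparison. Second, it bypasses the induction entirely: one sign check on $h(n-k-2)$ (reduced to a quadratic in $n$ after substituting the extreme value $s=(n-k)/2$) finishes the argument under $n\ge 5k+6$.

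One correction to your sketch: the sentence ``evaluating $f_{s+1}$ at this lower bound and showing it is negative (so the largest root of $f_{s+1}$ lies below $\rho(G_s)$)'' has the logic reversed. Since $f_{s+1}$ is a monic cubic, $f_{s+1}(L)<0$ forces its largest root to lie \emph{above} $L$, not below $\rho(G_s)$. What you actually need is $f_{s+1}(x)>f_s(x)\ge 0$ for all $x\ge\rho(G_s)$, which then forces $\rho(G_{s+1})<\rho(G_s)$; this is precisely the shape of the paper's argument, just with $2$ in place of $s+1$ and with the fixed threshold $n-k-2$ in place of your moving threshold $n-2s-k$. Adopting the direct-to-$s=2$ comparison eliminates both the induction bookkeeping and the need for any refined estimate on $\rho(G_s)$.
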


\begin{proof} Let $H_s=K_s\vee ((s+k-1)K_1\cup K_{n-2s-k+1})$.
Partition $V(H_{s})$ into $S\cup V_1\cup V_2$, where $V_1=V((s+k-1)K_1)$ and $V_2=V(K_{n-2s-k+1})$.
It is easy to see that this partition is an equitable partition.  The corresponding quotient matrix is
 \[
\mathbf{B}=
 \begin{pmatrix}
     s-1 &  s+k-1 & n-2s-k+1 \\
     s & 0& 0\\
     s & 0& n-2s-k
 \end{pmatrix}.
 \]
By a simple calculation, the characteristic polynomial of $\mathbf{B}$ is
\[
f_s(x)=x^3-(n-k-s-1)x^2-(n+s^2+(k-2)s-k)x-2s^3+(n-3k+2)s^2+((k-1)n-k^2+k)s.
\]
By Lemma \ref{equit},  $\rho(H_{s})$ is equal to the largest root of the equation $f_s(x)=0$.
In particular, for $H_2=K_2\vee ((k+1)K_1\cup K_{n-k-3})$,
$\rho(H_2)$ is equal to the largest root of $f_2(x)=0$.

Suppose that $s>2$. We only need to show that $\rho(H_s)<\rho(H_2)$.
Note that
\[
\frac{f_s(x)-f_2(x)}{s-2}=h(x):=x^2-(s+k)x-2s^2+(n-3k-2)s+(k+1)n-k^2-5k-4.
\]
As  a quadratic function of $x$, $h(x)$ is strictly increasing for $x\ge x_0:=\frac{s+k}{2}$. As $n\ge  frac{5}{3}k+\frac{8}{3}$ and $n\ge 2s+k$, one gets $x_0\le n-k-2$.
Then, for $x\in[n-k-2,+\infty)$,   $h(x)$ is strictly increasing, so
\begin{align*}
h(x) & \ge  h(n-k-2)\\
&=-2s^2-2ks+n^2-(2k+3)n+k^2+k\\
&\ge  -\frac{(n-k)^2}{2}-k(n-k)+n^2-(2k+3)n+k^2+k\\
&=\frac{1}{2}n^2-(2k+3)n+\frac{3}{2}k^2+k\\
&\ge  \frac{1}{2}(5k+6)^2-(2k+3)(5k+6)+\frac{3}{2}k^2+k\\
&=4k^2+4k\\
&\ge 0,
\end{align*}
where the second inequality follows because as  a quadratic function of $s$, $h(n-k-2)$ is strictly decreasing for $2\le s\le \frac{n-k}{2}$, and the third inequality follows because
 as  a quadratic function of $n$, $\frac{1}{2}n^2-(2k+3)n-\frac{1}{2}k^2+k$ is strictly increasing for
 $n\ge 5k+6>2k+3$. Thus, $h(x)> 0$ for $x\in[n-k-2,+\infty)$, as $h(x)=0$ implies $k=0$, but then $h(x)\ge h(n-2)>0$, a contradiction.
 That is, $f_s(x)> f_2(x)$ for $x\in[n-k-2,+\infty)$.  By Lemma \ref{inequit}, we have
$\rho(H_2)> \rho(K_{n-k-1})=n-k-2$,
so $f_s(x)> f_2(x)\ge 0$ for $x\in[\rho(H_2),+\infty)$.
implying that $\rho(H_s)<\rho(H_2)$.
\end{proof}

Now we are ready to prove Theorem  \ref{Cai1}.

\begin{proof}[Proof of Theorem  \ref{Cai1}]
Suppose by contradiction that $G$ does not have a matching of size $\frac{n-k}{2}$  containing any given edge. By Lemma \ref{matching}, there exists a vertex subset $S\subset V(G)$ with $|S|=s$ such that
either $o(G-S)\ge  s+k$ or $G[S]$ is empty and  $o(G-S)> s+k$. Evidently, $n\equiv s+o(G-S) \pmod 2$, i.e., $o(G-S)\equiv n-s \pmod 2$.  As  $n\equiv k \pmod 2$, one gets $o(G-S)\equiv  k-s \equiv s+k\pmod 2$. Thus, if $G[S]$ is empty, then $o(G-S)\ge s+k+2$.
Let $t=o(G-S)$, $n_1\le \dots \le n_{t-1}$ be the orders
the odd components of $G-S$ with the first $t-1$ smallest orders, and let $n_t=n-s-n_1-\dots-n_{t-1}$.
Then $G$ is a spanning subgraph of the graph $G':=G[S]\vee (K_{n_1}\cup \dots \cup K_{n_t})$.
By Lemma \ref{inequit},
\[
\rho(G)\le  \rho(G')
\]
with equality when $S\ne \emptyset$ if and only if $G\cong G'$.
Let $H_s=K_s\vee((s+k-1)K_1\cup K_{n-2s-k+1})$.

\noindent
{\bf Case 1. }$G[S]$ is not empty.

In this case, $s\ge  2$ and  $n\ge  s+o(G-S)\ge  2s+k$. So $2\le  s\le  \frac{n-k}{2}$. By Lemma \ref{inequit} again,
\[
\rho(G)\le  \rho(G^*)
\]
with equality if and only if $G\cong G^*$, where $G^*:=K_s\vee (K_{n_1}\cup \dots \cup K_{n_t})$.
By Lemmas \ref{GE} and \ref{SP}, we have
\[
\rho(G^*)\le \rho(H_s)
\]
with equality if and only if $G^*\cong H_s$. Thus
\[
\rho(G)\le \rho(H_s)
\]
with equality if and only if $G\cong H_s$.
By the assumption, $\rho(G)\ge  \rho(H_2)$. Thus
\[
\rho(H_2)\le \rho(G) \le \rho(H_s)
\]
By Lemma \ref{h1}, $\rho(G)=\rho(H_2)$, so $G\cong H_2$, which is a contradiction.

\noindent
{\bf Case 2. }$G[S]$ is empty.

Suppose first that $s=0$. Then $t\ge  k+2$ and $\rho(G)\le  \rho(G')\le \rho(K_{n_t})\le  n-k-2$. Since $\rho(H_2)>\rho(K_{n-k-1})=n-k-2$, we have $\rho(G)\le \rho(K_{n_t})<\rho(H_2)$, which is a contradiction.

Suppose next that $s=1$. Then $\rho(G)\le \rho(K_1\vee (K_{n_1}\cup \dots \cup K_{n_{k+3}}))$. By Lemmas \ref{GE} and \ref{SP}, we have
\[
\rho(K_1\vee (K_{n_1}\cup \dots \cup K_{n_{k+3}}))\le \rho(K_1\vee ((k+2)K_1\cup K_{n-k-3})).
\]
By Lemma \ref{inequit}, we have
\[
\rho(K_1\vee ((k+2)K_1\cup K_{n-k-3}))< \rho(H_2).
\]
Thus, we have $\rho(G)<\rho(H_2)$, a contradiction.

Now, suppose that $s\ge 2$. By Lemmas \ref{GE} and \ref{SP}, we have
\[
\rho(G')\le \rho(sK_1\vee((s+k+1)K_1\cup K_{n-2s-k-1}))
\]
with equality if and only if $G'\cong sK_1\vee((s+k+1)K_1\cup K_{n-2s-k-1})$.
By Lemma \ref{inequit},
$\rho(sK_1\vee((s+k+1)K_1\cup K_{n-2s-k-1}))<\rho(H_s)$. It thus follows that
$\rho(G)< \rho(H_s)$.
By Lemma \ref{h1},
\[
\rho(G)\le \rho(H_2).
\]
Thus  $\rho(G)<\rho(H_2)$, which is a contradiction.
\end{proof}

As an immediate consequence of Theorem \ref{Cai1}, we have

\begin{Corollary}\label{c1}Let $G$ be a graph of order $n\ge  6$, where $n \equiv 0 \pmod 2$. If $\rho(G)\ge \rho(K_2\vee (K_1\cup K_{n-3}))$, then $G$ has a $1$-factor containing any given edge unless $G\cong K_2\vee (K_1\cup K_{n-3})$.
\end{Corollary}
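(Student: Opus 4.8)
The plan is to specialize Theorem~\ref{Cai1} to the case $k=0$. First I would check that, with $k=0$, every hypothesis of the corollary is exactly the corresponding hypothesis of the theorem: the order bound $n\ge 5k+6$ becomes $n\ge 6$; the parity condition $n\equiv k\pmod 2$ becomes $n\equiv 0\pmod 2$; the extremal graph $K_2\vee((k+1)K_1\cup K_{n-k-3})$ becomes $K_2\vee(K_1\cup K_{n-3})$; and the spectral assumption $\rho(G)\ge\rho(K_2\vee(K_1\cup K_{n-3}))$ is precisely the $k=0$ instance of $\rho(G)\ge\rho(K_2\vee((k+1)K_1\cup K_{n-k-3}))$.

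Next I would translate the conclusion. Theorem~\ref{Cai1} gives, unless $G\cong K_2\vee(K_1\cup K_{n-3})$, that $G$ has a matching of size $\frac{n-k}{2}=\frac{n}{2}$ containing any given edge. Since $G$ has $n$ vertices with $n$ even, a matching of size $\frac{n}{2}$ saturates all $n$ vertices, hence is a perfect matching, which is by definition a $1$-factor; conversely a $1$-factor is a matching of size $\frac{n}{2}$. Therefore ``$G$ has a matching of size $\frac{n}{2}$ containing any given edge'' and ``$G$ has a $1$-factor containing any given edge'' are the same statement, and the corollary follows at once.

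There is essentially no obstacle here: the argument is just a substitution of parameters followed by the elementary observation that, on an even number of vertices, a maximum-cardinality matching is exactly a $1$-factor. The only point worth making explicit in the write-up is this last equivalence, so that the reader sees why the edge-covering conclusion of Theorem~\ref{Cai1} specializes to the $1$-factor version claimed here.
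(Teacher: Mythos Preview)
Your proposal is correct and matches the paper's approach: the paper simply states that the corollary is an immediate consequence of Theorem~\ref{Cai1}, which is exactly the $k=0$ specialization you describe, together with the observation that a matching of size $n/2$ on an even number of vertices is a $1$-factor.
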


\subsection{Signless Laplacian spectral radius}

\begin{Lemma} \label{h2} Let $n, s, k$ be positive integers with $n\ge \max\{5k+7,2s+k\}$, $s\ge 2$ and $0\le k\le n$. For fixed $n$ and $k$,
$q(K_s\vee ((s+k-1)K_1\cup K_{n-2s-k+1}))$ is uniquely maximized when $s=2$.
\end{Lemma}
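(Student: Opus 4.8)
The plan is to mirror the proof of Lemma \ref{h1}, but now working with the signless Laplacian instead of the adjacency matrix. Let $H_s=K_s\vee ((s+k-1)K_1\cup K_{n-2s-k+1})$ and partition $V(H_s)$ into $S\cup V_1\cup V_2$ with $V_1=V((s+k-1)K_1)$ and $V_2=V(K_{n-2s-k+1})$. This partition is equitable for $\mathbf{Q}(H_s)$, so by Lemma \ref{equit} the value $q(H_s)$ is the largest root of the characteristic polynomial $g_s(x)$ of the corresponding quotient matrix; the vertices in $S$ have degree $n-1$, those in $V_1$ have degree $s$, and those in $V_2$ have degree $n-s-1$, so the quotient matrix is
\[
\begin{pmatrix}
 n-1+s-1 & s+k-1 & n-2s-k+1\\
 s & s & 0\\
 s & 0 & n-s-1+n-2s-k
\end{pmatrix}.
\]
First I would compute $g_s(x)$ explicitly (a cubic in $x$ with coefficients polynomial in $s$, $n$, $k$), and in particular record $g_2(x)$, whose largest root is $q(H_2)$.

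Next, assuming $s>2$, I would form the difference $\dfrac{g_s(x)-g_2(x)}{s-2}$, which should again be a quadratic $\tilde h(x)=x^2-(\text{linear in }s,k)x-(\text{quadratic in }s,n,k)$ in $x$. The key step is to show $\tilde h(x)>0$ for all $x$ at least as large as a suitable threshold — here the natural threshold should be something like $q(K_{n-k-1})=2(n-k-2)$, or more conveniently $n-k-2$ as in Lemma \ref{h1}, whichever makes the quadratic estimates clean. As in the adjacency case, I would check that the vertex of the parabola $\tilde h$ lies to the left of the threshold (using $n\ge 2s+k$ and the hypothesis $n\ge 5k+7$), so that $\tilde h$ is increasing past the threshold; then evaluate $\tilde h$ at the threshold, bound $s$ away using monotonicity in $s$ (it should be decreasing in $s$ on the relevant range), substitute the extreme value $s=\frac{n-k}{2}$ or similar, and finally use $n\ge 5k+7$ to conclude the resulting quadratic in $n$ is nonnegative. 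This yields $g_s(x)>g_2(x)$ on the threshold ray.

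Finally, I would invoke Lemma \ref{inequit} to get $q(H_2)>q(K_{n-k-1})$ — here one must be a little careful because the signless Laplacian of $K_{n-k-1}$ as an induced subgraph is not directly $\mathbf{Q}(K_{n-k-1})$; instead I would argue via the equitable quotient matrix $g_2$ directly that its largest root exceeds the chosen threshold, or compare $q(H_2)$ with $q$ of a spanning supergraph of $K_{n-k-1}$ sitting inside $H_2$. Once $q(H_2)$ exceeds the threshold, $g_s(x)>g_2(x)\ge 0$ for all $x\ge q(H_2)$, forcing $q(H_s)<q(H_2)$, which is exactly the claimed uniqueness of the maximizer at $s=2$.

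The main obstacle I expect is the arithmetic in the second paragraph: the signless Laplacian quotient matrix has larger entries (degrees $n-1$ and $n-s-1$ appear on the diagonal), so the cubic $g_s(x)$ and the quadratic difference $\tilde h(x)$ have bulkier coefficients than in Lemma \ref{h1}, and getting the threshold, the monotonicity-in-$s$ step, and the final bound to line up forces the slightly stronger hypothesis $n\ge 5k+7$ (versus $5k+6$). A secondary subtlety is justifying $q(H_2)$ exceeds the threshold cleanly, since unlike the adjacency case one cannot just cite $\rho(K_{n-k-1})=n-k-2$; this is handled by reading off a lower bound for the largest root of $g_2$ directly from $g_2$ evaluated at the threshold.
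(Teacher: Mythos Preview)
Your plan matches the paper's proof: the paper uses the same equitable partition and quotient matrix, computes the cubic $f_s(x)$, forms $h(x)=(f_s(x)-f_2(x))/(s-2)$, and shows $h(x)>0$ for $x\ge 2n-2k-4$ (this, not $n-k-2$, is the correct threshold here) via monotonicity in $x$, then the substitution $s=\frac{n-k}{2}$ and finally $n=5k+7$. Your worry about the final step is unnecessary: $K_{n-k-1}\cup(k+1)K_1$ is a spanning subgraph of $H_2$, so Lemma~\ref{inequit} applies verbatim to give $q(H_2)>q\bigl(K_{n-k-1}\cup(k+1)K_1\bigr)=q(K_{n-k-1})=2n-2k-4$, exactly as in the adjacency case.
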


\begin{proof} Let $H_s=K_s\vee ((s+k-1)K_1\cup K_{n-2s-k+1})$.
Partition $V(H_{s})$ into $S\cup V_1\cup V_2$, where $V_1=V((s+k-1)K_1)$ and $V_2=V(K_{n-2s-k+1})$.
It is easy to see that this partition is an equitable partition.  The corresponding quotient matrix is
 \[
\mathbf{B}=
 \begin{pmatrix}
     n+s-2 &  s+k-1 & n-2s-k+1 \\
     s & s& 0\\
     s & 0& 2n-3s-2k
 \end{pmatrix}.
 \]
By a simple calculation, the characteristic polynomial of $\mathbf{B}$ is
\begin{align*}
f_s(x)&=x^3-(3n-2k-s-2)x^2-(4s^2-(n-4k+4)s-2n^2+2(k+2)n-4k)x\\
&\quad -2s^3+(4n-4k-2)s^2-(2n^2-2(2k+1)n+2k^2+2k)s.
\end{align*}
By Lemma \ref{equit},  $q(H_{s})$ is equal to the largest root of the equation $f_s(x)=0$.
Similarly, let $H_2=K_2\vee ((k+1)K_1\cup K_{n-k-3})$.
$q(H_2)$ is equal to the largest root of $f_2(x)=0$.

Suppose that $s>2$.
It is easy to see that
\begin{align*}
\frac{f_s(x)-f_2(x)}{s-2}&=h(x):=x^2+(n-4s-4k-4)x-2s^2+(4n-4k-6)s-2n^2\\
&\quad +(4k+10)n-2k^2-10k-12.
\end{align*}
Evidently, $h(x)$ is strictly increasing for $x\ge x_0:=\frac{4s-n+4k+4}{2}$. As $n\ge \max\{2k+4,2s+k\}$, one gets
$x_0\le 2n-2k-4$.
It then follows  that  $h(x)$ is strictly increasing for $x\in[2n-2k-4,+\infty)$.
Thus, for $x\in[2n-2k-4,+\infty)$, one gets
\begin{align*}
h(x) & \ge  h(2n-2k-4)\\
&=-2s^2-(4n-4k-10)s+4n^2-(14k+18)n+10k^2+30k+20\\
&\ge  -\frac{(n-k)^2}{2}-\frac{(4n-4k-10)(n-k)}{2}+4n^2-(14k+18)n+10k^2+30k+20\\
&=\frac{3}{2}n^2-(9k+13)n+\frac{15}{2}k^2+25k+20\\
&\ge  \frac{3}{2}(5k+7)^2-(9k+13)(5k+7)+\frac{15}{2}k^2+25k+20\\
&=2k+\frac{5}{2}\\
&>0,
\end{align*}
where the second inequality follows because $h(2n-2k-4)$ is strictly decreasing for $2\le s \le \frac{n-k}{2}$, and the third inequality follows because
$\frac{3}{2}n^2-(9k+13)n+\frac{15}{2}k^2+25k+20$ is strictly increasing
for  $n\ge 5k+7>3k+\frac{13}{3}$.
Thus $f_s(x)> f_2(x)$ for $x\in[2n-2k-4,+\infty)$.  By Lemma \ref{inequit}, we have
$q(H_2)> q(K_{n-k-1})=2n-2k-4$,
so $f_s(x)> f_2(x)\ge 0$ for $x\in[q(H_2),+\infty)$.
implying that $q(H_s)< q(H_2)$.
\end{proof}

Now we are ready to prove Theorem  \ref{Cai2}.

\begin{proof}[Proof of Theorem  \ref{Cai2}]
Suppose by contradiction that $G$ does not have a matching of size $\frac{n-k}{2}$  containing any given edge. By Lemma \ref{matching}, there exists a vertex subset $S\subset V(G)$ with $|S|=s$ such that
either $o(G-S)\ge  s+k$ or $G[S]$ is empty and  $o(G-S)> s+k$. Evidently, $n\equiv s+o(G-S) \pmod 2$, i.e., $o(G-S)\equiv n-s \pmod 2$.  As  $n\equiv k \pmod 2$, one gets $o(G-S)\equiv  k-s \equiv s+k\pmod 2$. Thus, if $G[S]$ is empty, then $o(G-S)\ge s+k+2$.
Let $t=o(G-S)$, $n_1\le \dots \le n_{t-1}$ be the orders
the odd components of $G-S$ with the first $t-1$ smallest orders, and let $n_t=n-s-n_1-\dots-n_{t-1}$.
Then $G$ is a spanning subgraph of the graph $G':=G[S]\vee (K_{n_1}\cup \dots \cup K_{n_t})$.
By Lemma \ref{inequit}, $q(G)\le  q(G')$
with equality when $S\ne \emptyset$ if and only if $G\cong G'$.
Let $H_s=K_s\vee((s+k-1)K_1\cup K_{n-2s-k+1})$.

\noindent
{\bf Case 1. }$G[S]$ is not empty.

In this case, $s\ge  2$ and  $n\ge  s+o(G-S)\ge  2s+k$. So $2\le  s\le  \frac{n-k}{2}$. By Lemma \ref{inequit} again,
$q(G)\le  q(G^*)$
with equality if and only if $G\cong G^*$, where $G^*:=K_s\vee (K_{n_1}\cup \dots \cup K_{n_t})$.
By Lemmas \ref{GE} and \ref{SP}, we have
\[
q(G^*)\le q(H_s)
\]
with equality if and only if $G^*\cong H_s$. Thus
\[
q(G)\le q(H_s)
\]
with equality if and only if $G\cong H_s$.
By the assumption, $q(G)\ge  q(H_2)$. Thus
\[
q(H_2)\le q(G) \le q(H_s)
\]
By Lemma \ref{h2}, $q(G)=q(H_2)$, so $G\cong H_2$, which is a contradiction.

\noindent
{\bf Case 2. }$G[S]$ is empty.

Suppose first that $s=0$. Then $t\ge  k+2$ and $q(G)\le  q(G')\le q(K_{n_t})\le  2n-2k-4$. Since $q(H_2)>q(K_{n-k-1})=2n-2k-4$, we have $q(G)\le  q(K_{n_t})<q(H_2)$, which is a contradiction.

Suppose next that $s=1$. Then $q(G)\le q(K_1\vee (K_{n_1}\cup \dots \cup K_{n_{k+3}}))$. By Lemmas \ref{GE} and \ref{SP}, we have
\[
q(K_1\vee (K_{n_1}\cup \dots \cup K_{n_{k+3}}))\le q(K_1\vee ((k+2)K_1\cup K_{n-k-3})).
\]
By Lemma \ref{inequit}, we have
\[
q(K_1\vee ((k+2)K_1\cup K_{n-k-3}))< q(H_2).
\]
Thus, we have $q(G)<q(H_2)$, a contradiction.

Finally, suppose that $s\ge 2$.  By Lemmas \ref{GE} and \ref{SP}, we have
\[
q(G')\le q(sK_1\vee((s+k+1)K_1\cup K_{n-2s-k-1}))
\]
with equality if and only if $G'\cong sK_1\vee((s+k+1)K_1\cup K_{n-2s-k-1})$.
By Lemma \ref{inequit},
$q(sK_1\vee((s+k+1)K_1\cup K_{n-2s-k-1}))<q(H_s)$. It thus follows that
$q(G)< q(H_s)$.
By Lemma \ref{h2},
\[
q(G)\le q(H_2).
\]
Thus  $q(G)<q(H_2)$, which is a contradiction.
\end{proof}

By Theorem \ref{Cai2}, we have

\begin{Corollary}\label{c1}Let $G$ be a graph of order $n\ge  7$, where $n \equiv 0 \pmod 2$. If $q(G)\ge  q(K_2\vee (K_1\cup K_{n-3}))$, then $G$ has a $1$-factor containing any given edge unless $G\cong K_2\vee (K_1\cup K_{n-3})$.
\end{Corollary}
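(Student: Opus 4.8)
The plan is to obtain this corollary as the special case $k=0$ of Theorem~\ref{Cai2}. Setting $k=0$, the hypotheses of Theorem~\ref{Cai2} become: $G$ has order $n\ge 5\cdot 0+7=7$ with $n\equiv 0\pmod 2$, and $q(G)\ge q(K_2\vee((0+1)K_1\cup K_{n-0-3}))=q(K_2\vee(K_1\cup K_{n-3}))$, which are exactly the hypotheses assumed here. Theorem~\ref{Cai2} then gives that $G$ has a matching of size $\frac{n-0}{2}=\frac n2$ containing any prescribed edge, unless $G\cong K_2\vee(K_1\cup K_{n-3})$. So the only remaining point is to identify "matching of size $\frac n2$" with "$1$-factor''.

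For a graph on $n$ vertices this identification is immediate: a matching consisting of $\frac n2$ edges saturates all $n$ vertices and hence is a $[1,1]$-factor, i.e.\ a $1$-factor; conversely every $1$-factor is a perfect matching of exactly $\frac n2$ edges. Therefore the statement "$G$ has a matching of size $\frac n2$ containing any given edge'' is literally the statement "$G$ has a $1$-factor containing any given edge'', and the exceptional graph is unchanged, so the corollary follows verbatim from Theorem~\ref{Cai2}.

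Since all the substantive work has already been carried out in Theorem~\ref{Cai2} (which in turn rests on the characterization in Lemma~\ref{matching} and the extremal comparison of Lemma~\ref{h2}), there is no genuine obstacle in this step; the only things to verify are the bookkeeping substitutions at $k=0$, namely $5k+7=7$, $(k+1)K_1=K_1$, $K_{n-k-3}=K_{n-3}$, $\frac{n-k}{2}=\frac n2$, together with the trivial equivalence between perfect matchings and $1$-factors noted above. One could equally have deduced it from Corollary~\ref{c1} of the spectral-radius section only after an analogous translation, but here the signless Laplacian version of Theorem~\ref{Cai2} is used directly.
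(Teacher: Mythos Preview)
Your proposal is correct and matches the paper's approach exactly: the paper simply writes ``By Theorem~\ref{Cai2}, we have'' before stating the corollary, and your substitution $k=0$ together with the identification of a size-$\frac{n}{2}$ matching with a $1$-factor is precisely the intended specialization.
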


\section{Star factor with a component  $K_{1,1}$ or $K_{1,2}$ containing any given edge}

We need the following lemma.

\begin{Lemma} \cite{CEK}\label{CEK}
Let $G$ be a graph and let $k$ be an integer, $k\ge 2$.  Then $G$ has an $S(k)$-factor with a component being  $K_{1,1}$ or $K_{1,2}$ containing any given edge if and only if, for any proper subset $S$ of $V(G)$,
\[
i(G-S)\le \begin{cases}
k|S| & \mbox{if $G[S]$ is empty,}\\
k|S|-2k+1 &  \mbox{otherwise.}
\end{cases}
\]
\end{Lemma}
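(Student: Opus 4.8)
The plan is to establish this as a Tutte--Berge type equivalence, handling the two implications by different means, and to lean on the Amahashi--Kano theorem --- a graph $H$ has an $S(k)$-factor if and only if $i(H-T)\le k|T|$ for every $T\subseteq V(H)$ --- which I will invoke, since it is not recalled in the excerpt. One may assume throughout that $G$ has no isolated vertex, as otherwise both sides of the claimed equivalence fail.

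For necessity, suppose $G$ has, through every edge, an $S(k)$-factor in which that edge lies in a $K_{1,1}$ or $K_{1,2}$ component, and fix a proper set $S\subseteq V(G)$. Applying the property to an arbitrary edge yields an (unconstrained) $S(k)$-factor of $G$, so Amahashi--Kano already gives $i(G-S)\le k|S|$, which settles the case $G[S]$ empty. When $G[S]$ contains an edge $e=u_0v_0$, I would use a charging/counting device: choose an $S(k)$-factor $F$ whose component through $e$ is $K_{1,1}$ or $K_{1,2}$, let $I$ be the isolated vertices of $G-S$, and count edges of $F$ between $I$ and $S$. Each $z\in I$ sends at least one such edge into $S$ (its parent, or all of its $\ge1$ children, which are forced into $S$ since $N_G(z)\subseteq S$), while each $s\in S$ sends at most $k$; crucially $u_0$ and $v_0$ together send at most $1$, because both lie in a star with at most two edges, at least one of which is the edge $u_0v_0$ inside $S$. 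Hence $i(G-S)\le|I|\le k(|S|-2)+1=k|S|-2k+1$, as required.

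For sufficiency, assume the two inequalities and fix an edge $e=uv$. First observe that $G$ has an $S(k)$-factor with $e$ in a $K_{1,1}$ or $K_{1,2}$ component if and only if either $G-\{u,v\}$ has an $S(k)$-factor, or $G-\{u,v,w\}$ has an $S(k)$-factor for some $w\in(N_G(u)\cup N_G(v))\setminus\{u,v\}$ (in a $K_{1,2}$ through $uv$ the centre must be $u$ or $v$). I would capture this disjunction by a single auxiliary graph $G_e$: delete $u$ and $v$, add a new vertex $z_0$, attach $k-1$ pendant vertices to $z_0$ (here $k\ge2$ is used), and join $z_0$ to every vertex of $(N_G(u)\cup N_G(v))\setminus\{u,v\}$. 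A short case analysis shows that in any $S(k)$-factor of $G_e$ the vertex $z_0$ must form a star comprising all $k-1$ pendants together with at most one further neighbour, so $G_e$ has an $S(k)$-factor exactly when the above disjunction holds, i.e.\ exactly when $G$ has the desired factor through $e$. By Amahashi--Kano it therefore suffices to verify $i(G_e-T)\le k|T|$ for every $T\subseteq V(G_e)$; one does this by splitting on whether $z_0\in T$, writing $T_0=T\cap V(G)$, and checking that the only potentially tight inequalities occur when $z_0\in T$ and no pendant lies in $T$, where the inequality becomes precisely $i(G-(T_0\cup\{u,v\}))\le k|T_0\cup\{u,v\}|-2k+1$, i.e.\ the hypothesis applied to the proper set $S=T_0\cup\{u,v\}$, which contains the edge $uv$ so that $G[S]$ is nonempty. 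All remaining choices of $T$ reduce, with room to spare, to the weaker bound $i(G-T_0)\le k|T_0|$. As this can be done for every edge $e$, $G$ has the stated property.

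The step I expect to be the main obstacle is this last bookkeeping inside the sufficiency proof: correctly enumerating the subsets $T$ of $V(G_e)$, tracking how the gadget ($z_0$ and its $k-1$ pendants) shifts the isolated-vertex count by exactly $2k-1$ in the extremal case, and confirming that no choice of $T$ forces a constraint on $G$ beyond the two stated ones --- together with the routine but case-heavy verification that $z_0$ is forced to play the role described in every $S(k)$-factor of $G_e$. It is precisely this accounting that pins down the constant $2k-1$ and makes transparent why $k\ge2$ is needed.
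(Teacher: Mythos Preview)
The paper does not prove this lemma at all: it is quoted verbatim from Chen, Egawa and Kano \cite{CEK} and used as a black box, so there is no ``paper's own proof'' to compare your attempt against.

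That said, your outline is a reasonable reconstruction of how such a characterization is typically established. The necessity half is essentially correct as written: the double count of $F$-edges between the isolated set $I$ and $S$, together with the observation that the two endpoints of $e$ jointly contribute at most one edge to $I$, cleanly yields $i(G-S)\le k(|S|-2)+1$. For sufficiency, your gadget $G_e$ (replace $u,v$ by $z_0$ with $k-1$ pendants, joined to $N_G(u)\cup N_G(v)$) does encode the disjunction ``$G-\{u,v\}$ has an $S(k)$-factor or $G-\{u,v,w\}$ does for some neighbour $w$'', since the pendants force $z_0$ to be a centre absorbing them and at most one further leaf. The Amahashi--Kano verification you sketch also goes through: when $z_0\in T$ and no pendant lies in $T$, one gets exactly $i(G-S)\le k|S|-2k+1$ for $S=T_0\cup\{u,v\}$, and every other choice of $T$ reduces (with slack at least $k+1$ per pendant placed in $T$, or via the presence of $z_0$ killing potential isolates in $N_G(u)\cup N_G(v)$) to the weaker bound $i(G-T_0)\le k|T_0|$. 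So your plan is sound; it simply goes well beyond what the present paper needed, which was only to \emph{invoke} the result.
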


\subsection{Spectral radius}
\begin{Lemma} \label{h3} Let $n, s, k$ be positive integers with $n\ge \max\{\frac{3}{2}k+5,(k+1)s+1\}$, $s\ge 1$ and $k\ge2$. For fixed $n$ and $k$,
$\rho(sK_1\vee ((ks+1)K_1\cup K_{n-(k+1)s-1}))$ is uniquely maximized when $s=1$.
\end{Lemma}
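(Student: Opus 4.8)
\textbf{Proof proposal for Lemma \ref{h3}.}

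The plan is to mimic the structure of the proofs of Lemmas \ref{h1} and \ref{h2}. Write $G_s = sK_1 \vee ((ks+1)K_1 \cup K_{n-(k+1)s-1})$. First I would set up an equitable partition of $V(G_s)$ into three parts: $S$, the $s$ vertices of the outer $sK_1$; $V_1$, the $ks+1$ isolated vertices; and $V_2$, the $n-(k+1)s-1$ vertices of the clique $K_{n-(k+1)s-1}$. Since every vertex of $V_1$ sees exactly the $s$ vertices of $S$, every vertex of $V_2$ sees the $s$ vertices of $S$ plus the $n-(k+1)s-2$ other vertices of $V_2$, and every vertex of $S$ sees all of $V_1 \cup V_2$ and none of $S$, this partition is equitable with quotient matrix
\[
\mathbf{B} = \begin{pmatrix} 0 & ks+1 & n-(k+1)s-1 \\ s & 0 & 0 \\ s & 0 & n-(k+1)s-2 \end{pmatrix}.
\]
By Lemma \ref{equit}, $\rho(G_s)$ is the largest root of $f_s(x) = \det(x\mathbf{I} - \mathbf{B})$. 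I would compute $f_s(x)$ explicitly as a cubic in $x$ whose coefficients are polynomials in $s$ (with $n,k$ as parameters), and likewise record $f_1(x)$ for the graph $G_1 = K_1 \vee ((k+1)K_1 \cup K_{n-k-2})$.

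Next, assuming $s > 1$, I would form the difference $f_s(x) - f_1(x)$, which vanishes at $s=1$, hence factors as $(s-1)\,h(x)$ where $h(x)$ is a quadratic in $x$ whose coefficients depend on $s,n,k$. The goal is to show $h(x) > 0$ on the interval $[\rho(G_1), +\infty)$, which will give $f_s(x) > f_1(x) \ge 0$ there and therefore $\rho(G_s) < \rho(G_1)$. To locate $\rho(G_1)$ I would use Lemma \ref{inequit}: $G_1$ contains $K_{n-k-1}$ as a subgraph (the clique $K_{n-k-2}$ together with the apex), and $G_1$ is connected, so $\rho(G_1) > \rho(K_{n-k-1}) = n-k-2$. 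Thus it suffices to show $h(x) > 0$ for all $x \ge n-k-2$. Since $h$ is an upward-opening parabola, I would check that its axis of symmetry $x_0$ satisfies $x_0 \le n-k-2$ using the hypotheses $n \ge \frac32 k + 5$ and $n \ge (k+1)s+1$ (the latter bounding $s \le \frac{n-1}{k+1}$, so $s$ cannot be too large relative to $n$), so that $h$ is increasing on $[n-k-2,\infty)$, and then bound $h(n-k-2)$ from below.

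The main obstacle, as in Lemmas \ref{h1}--\ref{h2}, is the endpoint estimate $h(n-k-2) \ge 0$. Here $h(n-k-2)$ is a polynomial in the three variables $s,n,k$, and the strategy is a chain of monotonicity reductions: first treat it as a quadratic in $s$ and argue it is decreasing on $1 \le s \le \frac{n-1}{k+1}$ (so substitute the largest admissible $s$, i.e. replace $s$ by something of order $\frac{n}{k+1}$, or more crudely use $s \le \frac{n-k}{k+1}$ type bounds), reducing to a polynomial in $n$ and $k$; then treat that as a quadratic in $n$, check it is increasing for $n \ge \frac32 k+5$, substitute $n = \frac32 k + 5$ (or the nearest integer consistent with the parity/integrality constraints), and finally verify the resulting expression in $k$ alone is nonnegative for all $k \ge 2$. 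The bookkeeping is delicate because the denominators $k+1$ from the constraint $n \ge (k+1)s+1$ must be cleared carefully, but the argument is otherwise routine. A small subtlety to handle at the end: if $h(n-k-2) = 0$ for some boundary case, one falls back to evaluating $h$ at a slightly larger point (e.g. $h(n-k-1)$) or invokes the strictness in Lemma \ref{inequit} to conclude $\rho(G_s) < \rho(G_1)$ strictly, exactly as done in the proof of Lemma \ref{h1}.
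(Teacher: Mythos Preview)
Your proposal follows the paper's proof essentially step for step: the same equitable three-block partition and quotient matrix $\mathbf{B}$, the same factoring $f_s(x)-f_1(x)=(s-1)h(x)$ with $h$ quadratic in $x$, the same reduction via $\rho(G_1)>\rho(K_{n-k-1})=n-k-2$ to the endpoint estimate $h(n-k-2)>0$, and the same final substitution $n=\tfrac32 k+5$. The one divergence is in how $h(n-k-2)$ is bounded as a function of $s$: the paper substitutes $s=1$ whereas you propose substituting the largest admissible $s\approx\tfrac{n-1}{k+1}$; since that quadratic in $s$ has leading coefficient $-(k^2+k)<0$ it opens downward and is not monotone on the whole range, so whichever route one takes the minimum on $[2,\tfrac{n-1}{k+1}]$ should in principle be checked at both endpoints---your caution about this step being ``delicate'' is well placed.
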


\begin{proof} Let $G_s=sK_1\vee ((ks+1)K_1\cup K_{n-(k+1)s-1})$.
Partition $V(G_{s})$ into $S\cup V_1\cup V_2$, where $V_1=V((ks+1)K_1)$ and $V_2=V(K_{n-(k+1)s-1})$.
It is easy to see that this partition is an equitable partition.  The corresponding quotient matrix is
 \[
\mathbf{B}=
 \begin{pmatrix}
     0 &  ks+1 & n-(k+1)s-1 \\
     s & 0& 0\\
     s & 0& n-(k+1)s-2
 \end{pmatrix}.
 \]
By a simple calculation, the characteristic polynomial of $\mathbf{B}$ is
\[
f_s(x)=x^3-(n-s-ks-2)x^2-(ns-s^2)x-(k^2+k)s^3+(kn-3k-1)s^2+(n-2)s.
\]
By Lemma \ref{equit},  $\rho(G_{s})$ is equal to the largest root of the equation $f_s(x)=0$.
In particular,
$\rho(G_1)$ is equal to the largest root of $f_1(x)=0$.

Suppose that $s>1$.
It is easy to see that
\begin{align*}
\frac{f_s(x)-f_1(x)}{s-1}&=h(x):=(k+1)x^2-(n-s-1)x-(k^2+k)s^2+(kn-k^2-4k-1)s\\
&\quad +(k+1)n-k^2-4k-3.
\end{align*}
Note that $h(x)$ is strictly increasing for $x\ge x_0:=\frac{n-s-1}{2(k+1)}$. As $n\ge \frac{2k^2+6k+2}{2k+1}$ and $s\ge1$, one gets
$x_0\le n-k-2$.
It then follows  that  $h(x)$ is strictly increasing for $x\in[n-k-2,+\infty)$.
Thus, one may check that, for $x\in[n-k-2,+\infty)$,
\begin{align*}
h(x) & \ge  h(n-k-2)\\
&=-(k^2+k)s^2+((k+1)n-k^2-5k-3)s+kn^2-(2k^2+4k)n+k^3+4k^2+3k-1\\
&\ge -(k^2+k)+((k+1)n-k^2-5k-3)+kn^2-(2k^2+4k)n+k^3+4k^2+3k-1\\
&=kn^2- (2k^2+3k-1)n+k^3+2k^2-3k-4\\
&\ge k\left(\frac{3}{2}k+5\right)^2- (2k^2+3k-1)\left(\frac{3}{2}k+5\right)+k^3+2k^2-3k-4\\
&=\frac{1}{4}k^3 + \frac{5}{2}k^2+\frac{17}{2}k + 1\\
&>0.
\end{align*}
Thus $f_s(x)> f_1(x)$ for $x\in[n-k-2,+\infty)$.  By Lemma \ref{inequit}, we have
$\rho(G_1)>\rho(K_{n-k-1})=n-k-2$,
so
$f_s(x)> f_1(x)\ge 0$ for $x\in[\rho(H_1),+\infty)$
implying that $\rho(G_s)< \rho(G_1)$.
\end{proof}

\begin{Lemma} \label{h4} Let $n, s, k$ be positive integers with $n\ge \max\{ k+7,(k+1)s-2k+2\}$, $s\ge 2$ and $k\ge2$. For fixed $n$ and $k$,
$\rho(K_s\vee ((ks-2k+2)K_1\cup K_{n-(k+1)s+2k-2}))$ is uniquely maximized when $s=2$.
\end{Lemma}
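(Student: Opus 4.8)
The plan is to mimic the proof of Lemma~\ref{h1} (and Lemma~\ref{h3}). Put $H_s=K_s\vee((ks-2k+2)K_1\cup K_{n-(k+1)s+2k-2})$, so that $H_2=K_2\vee(2K_1\cup K_{n-4})$ is precisely the extremal graph of Theorem~\ref{Cai3}. Partition $V(H_s)=S\cup V_1\cup V_2$ with $S=V(K_s)$, $V_1=V((ks-2k+2)K_1)$ and $V_2=V(K_{n-(k+1)s+2k-2})$; this partition is equitable and the quotient matrix of $\mathbf{A}(H_s)$ is
\[
\mathbf{B}=\begin{pmatrix} s-1 & ks-2k+2 & n-(k+1)s+2k-2\\ s & 0 & 0\\ s & 0 & n-(k+1)s+2k-3\end{pmatrix}.
\]
By Lemma~\ref{equit}, $\rho(H_s)$ equals the largest root of the monic cubic $f_s(x)=\det(xI-\mathbf{B})$; in particular $\rho(H_2)$ is the largest root of $f_2$. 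Since $s=2$ is admissible and the hypothesis $n\ge(k+1)s-2k+2$ is equivalent to $s\le\frac{n+2k-2}{k+1}$, it suffices to prove $\rho(H_s)<\rho(H_2)$ for every integer $s$ with $2<s\le\frac{n+2k-2}{k+1}$.

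Exactly as in Lemma~\ref{h1}, the coefficient of $x^{2}$ in $f_s$ is linear in $s$ while the coefficients of $x$ and of $1$ are quadratic and cubic in $s$, and each of these agrees with its value at $s=2$ up to a multiple of $s-2$; hence $h(x):=\frac{f_s(x)-f_2(x)}{s-2}$ is a quadratic polynomial in $x$ with positive leading coefficient $k$. The next step is to show that $h(x)>0$ for every $x\ge n-3$. The vertex of the parabola $h$ sits at $x_0=\frac{k(s-1)+2}{2k}\le\frac{s}{2}$, and using $s\le\frac{n+2k-2}{k+1}$ together with $n\ge k+7$ one checks that $x_0\le n-3$, so $h$ is increasing on $[n-3,\infty)$ and it is enough to prove $h(n-3)>0$. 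Viewed as a function of $s$, $h(n-3)$ is a downward-opening quadratic whose vertex lies strictly to the left of $s=2$, so on $2\le s\le\frac{n+2k-2}{k+1}$ it attains its minimum at $s=\frac{n+2k-2}{k+1}$, the value at which the inner clique $K_{n-(k+1)s+2k-2}$ degenerates to the empty graph. Substituting this $s$ and clearing the positive factor $k+1$ turns $h(n-3)$ into a quadratic in $n$ with leading coefficient $k^{2}>0$ whose vertex lies below $n=k+7$; evaluating it at $n=k+7$ leaves a polynomial in $k$ which is positive for every $k\ge2$. Hence $h(n-3)>0$, so $h(x)>0$ on $[n-3,\infty)$, and therefore $f_s(x)>f_2(x)$ there.

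To finish, note that $H_2$ is connected and contains $K_2\vee K_{n-4}=K_{n-2}$ as a proper subgraph, so by Lemma~\ref{inequit} we have $\rho(H_2)>\rho(K_{n-2})=n-3$. Consequently $f_s(x)>f_2(x)\ge0$ for every $x\ge\rho(H_2)$, the last inequality holding because $\rho(H_2)$ is the largest root of the monic cubic $f_2$. If $\rho(H_s)\ge\rho(H_2)$, then taking $x=\rho(H_s)$ would give $0=f_s(\rho(H_s))>0$, a contradiction; hence $\rho(H_s)<\rho(H_2)$, and $H_2$ is the unique maximizer. (When $n-(k+1)s+2k-2\le1$ the three-part partition degenerates, but then $f_s$ simply factors off a linear term and the identity that $\rho(H_s)$ is the largest root of $f_s$ still holds, so the argument is unaffected.)

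The only genuine difficulty here is computational: writing out $f_s$ from $\mathbf{B}$, verifying that $f_s-f_2$ is divisible by $s-2$ and identifying $h$, pinning down the vertices of the two parabolas (one in $x$, one in $s$) that drive the monotonicity steps, and carrying out the final positivity estimate after substituting the extreme value of $s$ and invoking $n\ge k+7$. Each of these is routine but the algebra is fairly heavy, and mild care is needed at the boundary values of $s$ where the inner clique is small.
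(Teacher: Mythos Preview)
Your proposal is correct and follows essentially the same approach as the paper: the same equitable partition and quotient matrix, the same definition $h(x)=\frac{f_s(x)-f_2(x)}{s-2}$ with leading coefficient $k$, the same reduction to $h(n-3)>0$ by locating the vertex $x_0=\frac{ks-k+2}{2k}\le n-3$, the same minimization over $s$ at the right endpoint $s=\frac{n+2k-2}{k+1}$ (since the vertex of the downward parabola in $s$ sits below $2$), and the same final positivity check at $n=k+7$. The only difference is presentational: the paper writes out $f_s$, $h$, and the chain of inequalities explicitly, whereas you narrate the computation; your extra remark about the degenerate case $n-(k+1)s+2k-2\le 1$ is a nice point the paper glosses over.
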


\begin{proof} Let $G_s=K_s\vee ((ks-2k+2)K_1\cup K_{n-(k+1)s+2k-2})$.
Partition $V(G_{s})$ into $S\cup V_1\cup V_2$, where $V_1=V((ks-2k+2)K_1)$ and $V_2=V(K_{n-(k+1)s+2k-2})$.

It is easy to see that this partition is an equitable partition.  The corresponding quotient matrix is
 \[
\mathbf{B}=
 \begin{pmatrix}
     s-1 &  ks-2k+2 & n-(k+1)s+2k-2 \\
     s & 0& 0\\
     s & 0& n-(k+1)s+2k-3
 \end{pmatrix}.
 \]
By a simple calculation, the characteristic polynomial of $\mathbf{B}$ is
\begin{align*}
f_s(x)&=x^3-(n+2k-ks-4)x^2-(n+ks^2-(3k-2)s+2k-3)x-(k^2+k)s^3\\
&\quad +(kn+4k^2-3k-2)s^2-(2(k-1)n+4k^2-10k+6)s.
\end{align*}
By Lemma \ref{equit},  $\rho(G_{s})$ is equal to the largest root of the equation $f_s(x)=0$.
Particularly,
$\rho(G_2)$ is equal to the largest root of $f_2(x)=0$.

Suppose that $s>2$.
It is easy to see that
\[
\frac{f_s(x)-f_2(x)}{s-2}=h(x):=kx^2-(ks-k+2)x-(k^2+k)s^2+(kn+2k^2-5k-2)s+2n-10.\\
\]
Note that $h(x)$ is is strictly increasing for $x\ge x_0:=\frac{ks-k+2}{2k}$. As $n\ge \max\{\frac{7k^2+5k+2}{2k^2+k},(k+1)s+1\}$, one gets
$x_0\le n-3$.
Thus, for $x\in[n-3,+\infty)$,   we have
\begin{align*}
h(x) & \ge  h(n-3)\\
&=-(k^2+k)s^2+(2k^2-2k-2)s+kn^2-5kn+6k-4\\
&\ge  -(k^2+k)\left(\frac{n+2k-2}{k+1}\right)^2+(2k^2-2k-2)\frac{n+2k-2}{k+1}+kn^2-5kn+6k-4\\
&=\frac{k^2}{k+1}n^2- \frac{7k^2+3k+2}{k+1}n+\frac{6k^2-2k}{k+1}\\
& \ge  \frac{k^2}{k+1}(k+7)^2- \frac{7k^2+3k+2}{k+1}(k+7)+\frac{6k^2-2k}{k+1}\\
&= \frac{1}{k+1}(k^4+7k^3+3k^2-25k-14)\\
&>0.
\end{align*}
Thus $f_s(x)> f_2(x)$ for $x\in[n-3,+\infty)$.  By Lemma \ref{inequit}, we have
$\rho(G_2)> \rho(K_{n-2})=n-3$,
so  $f_s(x)> f_2(x)\ge 0$ for $x\in[\rho(G_2),+\infty)$,
implying that $\rho(G_s)\le \rho(G_2)$.
\end{proof}

Now we are ready to prove Theorem  \ref{Cai3}.

\begin{proof}[Proof of Theorem  \ref{Cai3}]
Suppose by contradiction that $G$ does not have an $S(k)$-factor containing any given edge. By Lemma \ref{CEK}, there exists a vertex subset $S\subset V(G)$ with $|S|=s$ such that
\[
i(G-S)\ge \begin{cases}
ks+1 & \mbox{if $G[S]$ is empty,}\\
ks-2k+2 &  \mbox{otherwise.}
\end{cases}
\]
As $G$ has no isolated vertices, $s\ge 1$. By Lemma \ref{inequit},
$\rho(G)\le  \rho(G_s)$
with equality if and only if $G\cong G_s$, where
\[
G_s:=\begin{cases}
sK_1\vee ((ks+1)K_1\cup K_{n-(k+1)s-1}) & \mbox{if $G[S]$ is empty,}\\
K_s\vee ((ks-2k+2)K_1\cup K_{n-(k+1)s+2k-2}) &  \mbox{otherwise.}
\end{cases}
\]

Suppose first that $G[S]$ is empty.
It is evident that $s\ge 1$.
From  $n\ge  s+i(G-S)$, we have $n\ge  (k+1)s+1$. So $2\le  s\le  \frac{n-1}{k+1}$.
By Lemmas \ref{h3} and \ref{inequit}, $\rho(G)\le \rho(G_s)\le \rho(G_1)<\rho(G_2)$, contracting
the assumption.

Suppose next $G[S]$ is not empty.
 Then $s\ge  2$. From $n\ge  s+i(G-S)$, we have $n\ge  (k+1)s-2k+2$. Thus $2\le  s\le  \frac{n+2k-2}{k+1}$.
By Lemma \ref{h4}, $\rho(G)\le \rho(G_s)\le \rho(G_2)$,
By the assumption, $\rho(G)\ge \rho(G_2)$. Thus $\rho(G)=\rho(G_2)$, implying that $G\cong G_2$,
a contradiction.
\end{proof}

\subsection{Signless Laplacian spectral radius}

\begin{Lemma} \label{h5} Let $n, s, k$ be positive integers with $n\ge \max\{2k+4,(k+1)s+1\}$, $s\ge 1$ and $k\ge2$. For fixed $n$ and $k$,
$q(sK_1\vee ((ks+1)K_1\cup K_{n-(k+1)s-1}))$ is uniquely maximized when $s=1$.
\end{Lemma}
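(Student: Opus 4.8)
The plan is to adapt the proof of Lemma~\ref{h3} to the signless Laplacian setting, comparing characteristic polynomials of the relevant equitable quotient matrices. Set $G_s=sK_1\vee((ks+1)K_1\cup K_{n-(k+1)s-1})$ and partition $V(G_s)=S\cup V_1\cup V_2$ with $S=V(sK_1)$, $V_1=V((ks+1)K_1)$, $V_2=V(K_{n-(k+1)s-1})$. As a vertex of $S$ has degree $n-s$, a vertex of $V_1$ has degree $s$, and a vertex of $V_2$ has degree $n-ks-2$, this partition is equitable and the quotient matrix of $\mathbf{Q}(G_s)$ is
\[
\mathbf{B}=\begin{pmatrix} n-s & ks+1 & n-(k+1)s-1\\ s & s & 0\\ s & 0 & 2n-(2k+1)s-4\end{pmatrix}.
\]
By Lemma~\ref{equit}, $q(G_s)$ is the largest root of
\[
f_s(x)=x^3-(3n-(2k+1)s-4)x^2+(2n^2-(2k+1)sn-4n)x-2s(n-(k+1)s-1)(n-(k+1)s-2).
\]
The case $s=1$ is immediate, so I would assume $s>1$; then $2\le s\le\frac{n-1}{k+1}$ by $n\ge(k+1)s+1$, and it suffices to prove $q(G_s)<q(G_1)$.

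Next I would examine $h(x):=\dfrac{f_s(x)-f_1(x)}{s-1}$, a quadratic in $x$ whose leading coefficient is $2k+1>0$ and whose vertex is at $x_0=\frac n2$. Since $n\ge 2k+4$ forces $x_0\le 2n-2k-4$, $h$ is strictly increasing on $[\,2n-2k-4,+\infty)$, so $h(x)\ge h(2n-2k-4)$ there. Viewed as a function of $s$, $h(2n-2k-4)$ is a concave quadratic (leading coefficient $-2(k+1)^2$), hence on $[2,\frac{n-1}{k+1}]$ its minimum is attained at an endpoint. At $s=2$, and at $s=\frac{n-1}{k+1}$ (which simplifies matters since then $(k+1)s=n-1$), one is left with a quadratic in $n$ having positive leading coefficient and vertex below $2k+4$, hence increasing for $n\ge 2k+4$; its value at $n=2k+4$ is $2(k+1)(k+2)$ in the first case and $2(k+1)^2$ in the second, both positive. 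Therefore $h(x)>0$, i.e.\ $f_s(x)>f_1(x)$, for all $x\ge 2n-2k-4$.

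To finish, note that $G_1=K_1\vee((k+1)K_1\cup K_{n-k-2})$ contains $K_1\vee K_{n-k-2}=K_{n-k-1}$ as a proper subgraph and is connected, so Lemma~\ref{inequit} gives $q(G_1)>q(K_{n-k-1})=2n-2k-4$. Hence $f_s(x)>f_1(x)\ge 0$ for every $x\ge q(G_1)$, so $f_s$ has no root in $[q(G_1),+\infty)$, i.e.\ $q(G_s)<q(G_1)$. This proves the lemma.

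As in Lemmas~\ref{h1}--\ref{h4}, the main obstacle is the last positivity estimate: one must deal with both endpoints $s=2$ and $s=\frac{n-1}{k+1}$ of the $s$-range, verify that each resulting polynomial in $n$ is increasing on $n\ge 2k+4$, and substitute $n=2k+4$. The computations are a bit bulkier than for Lemma~\ref{h3} because the diagonal degree entries enlarge $\mathbf{B}$, but no new idea is needed.
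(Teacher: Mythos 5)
Your proposal is correct and follows essentially the same route as the paper: the same equitable quotient matrix and characteristic polynomial, the same difference quotient $h(x)=\frac{f_s(x)-f_1(x)}{s-1}$ with vertex at $x_0=\frac n2\le 2n-2k-4$, and the same closing step $q(G_1)>q(K_{n-k-1})=2n-2k-4$. The only divergence is in bounding the concave quadratic $h(2n-2k-4)$ in $s$: you check both endpoints $s=2$ and $s=\frac{n-1}{k+1}$ (your values $2(k+1)(k+2)$ and $2(k+1)^2$ at $n=2k+4$ both check out), whereas the paper substitutes $s=1$, which also yields a valid lower bound but only via an unstated concavity argument, so your version is if anything the more carefully justified of the two.
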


\begin{proof} Let $G_s=sK_1\vee ((ks+1)K_1\cup K_{n-(k+1)s-1})$.
Partition $V(G_{s})$ into $S\cup V_1\cup V_2$, where $V_1=V((ks+1)K_1)$ and $V_2=V(K_{n-(k+1)s-1})$.
It is easy to see that this partition is an equitable partition.  The corresponding quotient matrix is
 \[
\mathbf{B}=
 \begin{pmatrix}
     n-s &  ks+1 & n-(k+1)s-1 \\
     s & s& 0\\
     s & 0& 2n-(2k+1)s-4
 \end{pmatrix}.
 \]
By a simple calculation, the characteristic polynomial of $\mathbf{B}$ is
\begin{align*}
f_s(x)&=x^3-(3n-(2k+1)s-4)x^2+(2n^2-4n-(2k+1)ns)x-(2k^2+4k+2)s^3\\
&\quad +(4(k+1)n-6k-6)s^2-(2n^2-6n+4)s.
\end{align*}
By Lemma \ref{equit},  $q(G_{s})$ is equal to the largest root of the equation $f_s(x)=0$,
and
$q(G_1)$ is equal to the largest root of $f_1(x)=0$.

Suppose that $s>1$.
It is easy to see that
\begin{align*}
\frac{f_s(x)-f_1(x)}{s-1}&=h(x):=(2k+1)x^2-(2k+1)nx-(2k^2+4k+2)s^2\\
&\quad +(4kn+4n-2k^2-10k-8)s-2n^2+(4k+10)n-2k^2-10k-12.
\end{align*}
Note that  $h(x)$ is strictly increasing for $x\ge x_0:=\frac{n}{2}$. As $n\ge  \frac{4}{3}k+\frac{8}{3}$, one gets
$x_0\le 2n-2k-4$.
It then follows  that  $h(x)$ is strictly increasing for $x\in[2n-2k-4,+\infty)$.
Thus
\begin{align*}
h(x) & \ge  h(2n-2k-4)\\
&=-(2k^2+4k+2)s^2+(4(k+1)n-2k^2-10k-8)s+4kn^2-(12k^2+26k+2)n\\
&\quad +8k^3+34k^2+38k+4\\
&\ge -(2k^2+4k+2)+(4(k+1)n-2k^2-10k-8)+4kn^2-(12k^2+26k+2)n\\
&\quad +8k^3+34k^2+38k+4\\
&=4kn^2-(12k^2+22k-2)n+8k^3+30k^2+24k-6\\
& \ge  4k(2k+4)^2-(12k^2+22k-2)(2k+4)+8k^3+30k^2+24k-6\\
&= 2k^2+4k+2\\
&>0.
\end{align*}
Thus $f_s(x)> f_1(x)$ for $x\in[2n-2k-4,+\infty)$.  By Lemma \ref{inequit}, we have
$q(G_1)> q(K_{n-k-1})=2n-2k-4$,
so
$f_s(x)> f_1(x)\ge 0$ for $x\in[q(G_1),+\infty)$,
implying that $q(G_s) \le q(G_1)$.
\end{proof}

\begin{Lemma} \label{h6} Let $n, s, k$ be positive integers with $n\ge \max\{2k+6,(k+1)s-2k+2\}$, $s\ge 2$ and $k\ge2$. For fixed $n$ and $k$,
$q(K_s\vee ((ks-2k+2)K_1\cup K_{n-(k+1)s+2k-2}))$ is uniquely maximized when $s=2$.
\end{Lemma}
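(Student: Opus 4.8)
The plan is to run the argument of Lemma~\ref{h5} with $G_s:=K_s\vee((ks-2k+2)K_1\cup K_{n-(k+1)s+2k-2})$ in place of $sK_1\vee((ks+1)K_1\cup K_{n-(k+1)s-1})$; equivalently, this is the signless Laplacian analogue of Lemma~\ref{h4}, just as Lemma~\ref{h5} is the signless Laplacian analogue of Lemma~\ref{h3}. Partition $V(G_s)$ into $S\cup V_1\cup V_2$ with $V_1=V((ks-2k+2)K_1)$ and $V_2=V(K_{n-(k+1)s+2k-2})$. This partition is equitable, and a routine degree count (a vertex of $S$ has degree $n-1$, a vertex of $V_1$ has degree $s$, a vertex of $V_2$ has degree $n-ks+2k-3$) gives the quotient matrix of $\mathbf{Q}(G_s)$,
\[
\mathbf{B}=\begin{pmatrix}
 n+s-2 & ks-2k+2 & n-(k+1)s+2k-2\\
 s & s & 0\\
 s & 0 & 2n-(2k+1)s+4k-6
\end{pmatrix}.
\]
By Lemma~\ref{equit}, $q(G_s)$ is the largest root of the monic cubic $f_s(x):=\det(xI-\mathbf{B})$; in particular $q(G_2)$ is the largest root of $f_2(x)$, where $G_2=K_2\vee(2K_1\cup K_{n-4})$.

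Next I would expand $f_s(x)$, observe that $f_s(x)-f_2(x)$ vanishes identically in $x$ at $s=2$, and set $h(x):=\frac{f_s(x)-f_2(x)}{s-2}$. One finds that $h$ is a quadratic in $x$ with positive leading coefficient $2k-1$, hence strictly increasing on $[x_0,\infty)$, where $x_0$ is the vertex of the parabola; using the hypotheses $n\ge 2k+6$ and $n\ge(k+1)s-2k+2$ (equivalently $2\le s\le\frac{n+2k-2}{k+1}$), one checks $x_0\le 2n-6$, so $h$ is strictly increasing on $[2n-6,\infty)$.

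The core step is the numerical estimate $h(2n-6)>0$ for all admissible $s,k,n$, carried out in the same cascade as in Lemmas~\ref{h4} and~\ref{h5}: $h(2n-6)$ is a quadratic in $s$ that is decreasing on $2\le s\le\frac{n+2k-2}{k+1}$, hence bounded below by its value at $s=\frac{n+2k-2}{k+1}$; substituting that value yields a quadratic in $n$ with positive leading coefficient, increasing on the relevant range, hence bounded below by its value at $n=2k+6$; and evaluating there leaves a polynomial in $k$ which is strictly positive for every integer $k\ge 2$. Thus $f_s(x)>f_2(x)$ on $[2n-6,\infty)$. Since $G_2$ is connected and contains $K_{n-2}$ (on the $K_2$ together with the $K_{n-4}$) as a proper subgraph, Lemma~\ref{inequit} gives $q(G_2)>q(K_{n-2})=2n-6$; as $f_2$ is a monic cubic with largest root $q(G_2)$, we have $f_2(x)\ge 0$ for $x\ge q(G_2)$, whence $f_s(x)>f_2(x)\ge 0$ there, so the largest root $q(G_s)$ of $f_s$ satisfies $q(G_s)<q(G_2)$. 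This yields the unique maximizer at $s=2$.

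I expect the only real difficulty to be computational bookkeeping: expanding the cubic $f_s(x)$ and the quadratic $h(x)$ correctly, pinning down the exact threshold ($2n-6$, rather than a nearby competitor) beyond which $h$ is increasing, and verifying that the two endpoint substitutions — first in the $s$-quadratic, then in the resulting $n$-quadratic — genuinely point in the claimed monotonicity directions over the whole admissible region $s\ge 2$, $k\ge 2$, $n\ge 2k+6$. One should also confirm that the terminal polynomial in $k$ obtained at $n=2k+6$ is \emph{strictly} positive for $k\ge 2$, so that no boundary degeneracy of the kind seen with $k=0$ in Lemma~\ref{h1} occurs; since $k\ge 2$ is assumed here, a strict inequality should hold throughout, but the constant term must be checked explicitly.
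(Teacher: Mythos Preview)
Your proposal is correct and follows essentially the same route as the paper's proof: the same equitable partition and quotient matrix, the same quadratic $h(x)=(f_s(x)-f_2(x))/(s-2)$ with leading coefficient $2k-1$, the same threshold $2n-6$, and the same two-stage endpoint substitution (first $s=\frac{n+2k-2}{k+1}$, then $n=2k+6$) to verify $h(2n-6)>0$. The paper carries out the expansions explicitly and arrives at the terminal expression $4(2k(2k^2-k-5)-3)>0$ for $k\ge 2$, confirming the strict positivity you flagged.
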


\begin{proof} Let $G_s=K_s\vee ((ks-2k+2)K_1\cup K_{n-(k+1)s+2k-2})$.
Partition $V(G_{s})$ into $S\cup V_1\cup V_2$, where $V_1=V((ks-2k+2)K_1)$ and $V_2=V(K_{n-(k+1)s+2k-2})$.
It is easy to see that this partition is an equitable partition.  The corresponding quotient matrix is
 \[
\mathbf{B}=
 \begin{pmatrix}
     n+s-2 &  ks-2k+2 & n-(k+1)s+2k-2 \\
     s & s& 0\\
     s & 0& 2n-(2k+1)s+4k-6
 \end{pmatrix}.
 \]
By a simple calculation, the characteristic polynomial of $\mathbf{B}$ is
\begin{align*}
f_s(x)&=x^3-(3n-(2k-1)s+4k-8)x^2+(2n^2+(4k-10)n-4ks^2\\
&\quad -((2k-3)n-12k+12)s-8k+12)x-2k^2s^3+(4kn+8k^2-14k)s^2\\
&\quad -(2n^2+(8k- 14)n+8k^2-28k+24)s.
\end{align*}
By Lemma \ref{equit},  $q(G_{s})$ is equal to the largest root of the equation $f_s(x)=0$, and
$q(G_2)$ is equal to the largest root of $f_2(x)=0$.

Suppose that $s>2$.
It is easy to see that
\begin{align*}
\frac{f_s(x)-f_2(x)}{s-2}&=h(x):=(2k-1)x^2-((2k-3)n+4ks-4k+12)x-2k^2s^2\\
&\quad +(4kn+4k^2-14k)s-2n^2+14n-24.
\end{align*}
As $n\ge \max\{\frac{28k^2+12k}{6k^2+k-1},(k+1)s-2k+2\}$, one gets
$x_0:=\frac{(2k-3)n+4ks-4k+12}{2(2k-1)}\le 2n-6$.
So  $h(x)$ is strictly increasing for $x\in[2n-6,+\infty)$.
Thus
\begin{align*}
h(x) & \ge  h(2n-6)\\
&=-2k^2s^2-(4kn-4k^2-10k)s+4kn^2-(28k+4)n+48k+12\\
&\ge  -2k^2\left(\frac{n+2k-2}{k+1}\right)^2-(4kn-4k^2-10k)\frac{n+2k-2}{k+1}+4kn^2-(28k+4)n+48k+12\\
&=\frac{1}{(k+1)^2}\left((4k^3+2k^2)n^2-(40k^3+38k^2+18k+4)n+84k^3+92k^2+52k+12\right)\\
& \ge  \frac{1}{(k+1)^2}((4k^3+2k^2)(2k+6)^2-(40k^3+38k^2+18k+4)(2k+6)\\
&\quad +84k^3+92k^2+52k+12)\\
&= \frac{1}{(k+1)^2}(16k^5+24k^4-40k^3-100k^2-64k-12)\\
&=4(2k(2k^2-k-5)-3)\\
&>0.
\end{align*}
Thus $f_s(x)> f_2(x)$ for $x\in[2n-6,+\infty)$.  By Lemma \ref{inequit}, we have
$q(G_2)> q(K_{n-2})=2n-6$,
so
$f_s(x)> f_2(x)\ge 0$ for $x\in[q(G_2),+\infty)$,
implying that $q(G_s)\le q(G_2)$.
\end{proof}

Now we are ready to prove Theorem  \ref{Cai4}.

\begin{proof}[Proof of Theorem  \ref{Cai4}]
Suppose by contradiction that $G$ does not have an $S(k)$-factor containing any given edge. By Lemma \ref{CEK}, there exists a vertex subset $S\subset V(G)$ with $|S|=s$ such that
\[
i(G-S)\ge \begin{cases}
ks+1 & \mbox{if $G[S]$ is empty,}\\
ks-2k+2 &  \mbox{otherwise.}
\end{cases}
\]
As $G$ has no isolated vertices, $s\ge 1$. By Lemma \ref{inequit},
$q(G)\le  q(G_s)$
with equality if and only if $G\cong G_s$, where
\[
G_s:=\begin{cases}
sK_1\vee ((ks+1)K_1\cup K_{n-(k+1)s-1}) & \mbox{if $G[S]$ is empty,}\\
K_s\vee ((ks-2k+2)K_1\cup K_{n-(k+1)s+2k-2}) &  \mbox{otherwise.}
\end{cases}
\]

Suppose first that $G[S]$ is empty.
It is evident that $s\ge 1$.
From  $n\ge  s+i(G-S)$, we have $n\ge  (k+1)s+1$. So $2\le  s\le  \frac{n-1}{k+1}$.
By Lemmas \ref{h5} and \ref{inequit}, $q(G)\le q(G_s)\le q(G_1)<q(G_2)$, contracting
the assumption.

Suppose next $G[S]$ is not empty.
 Then $s\ge  2$. From $n\ge  s+i(G-S)$, we have $n\ge  (k+1)s-2k+2$. Thus $2\le  s\le  \frac{n+2k-2}{k+1}$.
By Lemma \ref{h6}, $q(G)\le q(G_s)\le q(G_2)$,
By the assumption, $q(G)\ge q(G_2)$. Thus $q(G)=q(G_2)$, implying that $G\cong G_2$,
a contradiction.
\end{proof}

\bigskip

\noindent {\bf Acknowledgement.}
This work was supported by National Natural Science Foundation of China (No. 12071158).

\end{document}